\newtheorem{thm}{Theorem}
\newtheorem{conj}[thm]{Conjecture}
\newtheorem{ques}{Question}
\newtheorem{lem}[thm]{Lemma}
\newtheorem{prop}[thm]{Proposition}
\newtheorem{cor}[thm]{Corollary}
\newtheorem*{Meyniel}{Meyniel's Conjecture}
\DeclareMathOperator{\ecop}{\overline{c}}
\DeclareMathOperator{\cop}{c}
\DeclareMathOperator{\capt}{capt}
\DeclareMathOperator{\diam}{diam}
\DeclareMathOperator{\Om}{\Omega}
\DeclareMathOperator{\Th}{\Theta}
\begin{document}

\title{A study of cops and robbers in oriented graphs
}

\author[1]{Devvrit Khatri}
\affil[1]{Department of Computer Science and Engineering, Birla Institute of Technology and Science, Pilani, India}
\author[2]{Natasha Komarov}
\affil[2]{Department of Mathematics, Computer Science, and Statistics, St. Lawrence University, Canton, NY, USA}
\author[3]{Aaron Krim-Yee}
\affil[3]{Department of Bioengineering, McGill University, Montreal, QC, Canada}
\author[4]{Nithish Kumar}
\affil{Department of Computer Science and Engineering, National Institute of Technology -- Tiruchirapalli, India}
\author[5,6]{Ben Seamone}
\affil[5]{Mathematics Department, Dawson College, Montreal, QC, Canada}
\affil[6]{D\'{e}partement d'informatique et de recherche op\'{e}rationnelle, Universit\'{e} de Montr\'{e}al, Montreal, QC, Canada}
\author[7]{Virg\'elot Virgile}
\affil[7]{D\'epartement de math\'ematiques et de statistique, Universit\'{e} de Montr\'{e}al, Montreal, QC, Canada}
\author[5]{AnQi Xu}




\renewcommand{\thefootnote}{\roman{footnote}}	

\maketitle
\renewcommand{\thefootnote}{\arabic{footnote}}	

\begin{abstract}
We consider the well-studied cops and robbers game in the context of oriented graphs, which has received surprisingly little attention to date.  We examine the relationship between the cop numbers of an oriented graph and its underlying undirected graph, giving a surprising result that there exists at least one graph $G$ for which every strongly connected orientation of $G$ has cop number strictly less than that of $G$.  We also refute a conjecture on the structure of cop-win digraphs, study orientations of outerplanar graphs, and study the cop number of line digraphs.  Finally, we consider some the aspects of optimal play, in particular the capture time of cop-win digraphs and properties of the relative positions of the cop(s) and robber.
\end{abstract}

\section{Introduction}

The game of Cops and Robbers played on graphs was introduced independently by Quillot \cite{Q83} and Nowakowski and Winkler \cite{NW83}.  The game is played between a set of pursuers (cops) and an evader (robber) who move from vertex to vertex in a graph.  The standard rules of play are that the cops initially chose their positions, followed by the robber choosing his.  Each cop may then move to an adjacent vertex or stay in place.  After the cops take their turns, the robber may move to an adjacent vertex or stay in place.  This constitutes one round of play.  The cops win if at least one cop is able to occupy the same vertex as the robber after a finite number of rounds; the robber wins if he can avoid capture indefinitely.

There are two standard parameters of interest in the standard cops and robbers model:
	\begin{compactitem}
	\item $\cop(G)$, or the \textit{cop number} of a graph $G$, is the fewest number of cops which guarantee that the cops can win the game;
    \item $\capt(G)$, or the \textit{capture time} of $G$, is the fewest number of rounds in which $\cop(G)$ cops can win.
    \end{compactitem}

While a significant amount of work has been done in studying cops and robbers on undirected graphs (see \cite{BN} for a thorough treatment of all topics related to cops and robbers), comparatively little is known about cops and robbers on directed graphs.
A general directed graph may have loops or parallel arcs.  An \textit{oriented graph}, or ograph, is a directed graph with no loops and no parallel arcs; in other words it is a simple graph whose edges have each been assigned some orientation.  Ographs will be the primary focus of this work\footnote{Since we allow each agent to stay on their current vertex on their turn, there is nothing lost or gained by imagining a loop present on every vertex.}.

Let us begin by surveying some of the results known regarding cops and robbers on directed graphs.  One of the first papers to address the problem is \cite{AF84}, where a simple forbidden subgraph condition is given to ensure that $\cop(D) \geq \delta^+(D)$.  Directed graphs are further considered in \cite{H87}, where upper bounds are provided for directed abelian Cayley graphs.  In \cite{FKL12}, it is shown that every strongly connected digraph $D$ satisfies $\cop(D) \in O(n\tfrac{(\log\log{n})^2}{\log{n}})$.  Vertex transitive oriented grids and the cop number of their quotients are studied in \cite{Hthesis,HM18}, and orientations of planar graphs are considered in \cite{LO17}.  It is shown in \cite{Hthesis} that there exist ographs with $\Delta^+$ and $\Delta^-$ each at most $2$ having unbounded cop number; the minimum order of a $3$-cop-win digraph is shown to be either $7$ or $8$.
Hahn and MacGillivray \cite{HM06} give an algorithmic characterization of cop-win digraphs, as well as a method for reducing the game played with $k$ cops and $l$ robbers to the $1$ cop, $1$ robber case.  Unfortunately, these results do not give a structural characteristic of such graphs (we address this problem in Section \ref{killtheconj}).  Kinnersley \cite{K15} has shown that determining the cop number of a graph or digraph is EXPTIME-complete.  The capture time of directed graphs has also been examined by Kinnersley \cite{K18}, who showed that for every $k\geq 1$ there exists an $n$-vertex directed graph for which $\cop(D) =k$ and $\capt(D) \in \Th((\tfrac{n}{k})^{k+1})$.  It should finally be noted that work has been done on a version of cops and robbers in undirected and directed graphs where the robber can move infinitely fast (among many other variations).  The number of cops needed in this setting is often tied to various width parameters of the graph.  The literature is extensive, and since it is not the focus of this work we will not survey those results here.

This paper is structured as follows.  In Section \ref{gen}, we survey some basic results about the cop number of an oriented graph.  In particular, we examine its relationship to the cop number of the underlying undirected graph, showing that $\cop(D)$ and $\cop(G)$ are incomparable in general, where $G$ is the underlying graph of an ograph $G$, even in the case when $D$ is strongly connected.  Section \ref{killtheconj} is devoted to disproving a conjecture posed in \cite{DGGH16} which attempted to characterize all cop-win ographs.  In Section \ref{planar}, we prove that all strongly connected orientations of outerplanar graphs have bounded cop number; a similar result for general planar graphs remains an open problem.  Section \ref{linedigraphs} explores the relationship between the cop number of an ograph and the cop number of its arc digraph.  Finally, in Section \ref{strategy}, we consider some problems related to the optimal play of the agents.  We give a construction of an $n$-vertex cop-win ograph for which the capture time is in $\Th(n^2)$ (in contrast with the known result that all undirected cop win graphs have capture time at most linear in $n$).  We also consider whether or not a cop can be made to revisit a previously visited vertex or a cop can be made to increase her distance from the robber in optimal play.  These questions were answered positively in \cite{B+13} for undirected graphs, and we show that indeed both are possible in ographs as well.

\section{Behaviour of the cop number in ographs}\label{gen} 


One of the first results in cops and robbers is a structural characterization of those graphs which have cop number equal to $1$; such graphs are called \textit{cop-win}.  As mentioned in the introduction, no such characterization is currently known for digraphs.  It is easy to see that if a digraph $D$ contains sources, then $\cop(D)$ must be at least as large as the number of sources, and so a cop-win digraph must have at most one source.  It is also not hard to see that a graph with no source must necessarily have cop number at least $2$.  To see this, note that if a single cop starts on vertex $v$, then the robber starts on some vertex $u$ for which $(u,v) \in A(D)$.  After each cop turn, the robber moves to the cop's previous position.  This observation that a cop-win graph must have only one source was further refined as follows:

\begin{prop}\cite{DGGH16}\label{onesource}
If $D$ contains exactly one source and no directed cycles, then $\cop(D) = 1$.
\end{prop}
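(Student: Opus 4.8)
The plan is to exhibit an explicit winning strategy for a single cop and to use acyclicity to force the game to terminate. First I would record the key structural consequence of the hypotheses: in a finite acyclic digraph, starting from any vertex and repeatedly following an incoming arc backwards produces a walk that can never repeat a vertex, so it must terminate at a vertex of in-degree zero, i.e.\ a source. Since $D$ has exactly one source $s$, this shows that \emph{every} vertex of $D$ is reachable from $s$ along a directed path. I would also fix a topological ordering, i.e.\ a labelling $\tau : V(D) \to \{1,\dots,n\}$ with $\tau(u) < \tau(v)$ whenever $(u,v) \in A(D)$; such a labelling exists precisely because $D$ is acyclic.

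The cop's strategy is to start on the source $s$ and thereafter, on each of her turns, to advance a single step along a directed path leading to the robber's current vertex. Writing $R(v)$ for the set of vertices reachable from $v$ (including $v$ itself), I would maintain the invariant that, at the start of each round, the robber's vertex lies in $R(c)$, where $c$ is the cop's vertex. This holds at the outset because the robber's initial vertex lies in $V(D) = R(s)$. Given the invariant, if the cop has not already captured the robber then there is a directed path of length at least one from $c$ to the robber, and the cop advances to the second vertex $c'$ of this path, so that the robber's vertex is still reachable from $c'$.

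The crux is to check that the invariant survives the robber's reply and that the cop makes genuine progress. For the first point, after the cop moves to $c'$ the robber's vertex $r$ satisfies $r \in R(c')$, and any vertex the robber can move to lies in $R(r) \subseteq R(c')$ by transitivity of reachability; hence the invariant is restored. For progress, I would track $\tau(c)$: each cop move follows an arc, so $\tau(c)$ strictly increases in every round in which no capture occurs, while the invariant forces $\tau(c) \le \tau(r) \le n$ at all times (with equality $\tau(c')=\tau(r)$ only when $c'=r$, i.e.\ at capture). A strictly increasing sequence of integers bounded above by $n$ cannot continue indefinitely, so capture must occur within $n$ rounds.

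The main obstacle — and the only place the hypotheses are really used — is the interplay between the two halves of the argument: acyclicity simultaneously supplies the topological labelling that yields termination and guarantees that the backward walks terminate at a source, while the uniqueness of the source is exactly what lets the cop begin in a position from which the robber is reachable. (Two sources would permit the robber to start on a source other than the cop's, from which he is reachable by no one.) I expect the bookkeeping showing $R(r) \subseteq R(c')$ is preserved to be the most delicate step, since it requires that the robber's move can only shrink, and never let him escape, the cop's reachable set.
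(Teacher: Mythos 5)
The paper imports Proposition \ref{onesource} from \cite{DGGH16} without reproducing a proof, so there is no internal argument to compare yours against. Your proof is correct and complete: uniqueness of the source places the robber in the cop's reachable set from the outset, transitivity of reachability preserves that invariant through both players' moves, and the cop's strictly increasing topological label, bounded above by $\tau(r) \le n$, forces capture within $n$ rounds.
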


We will return to the topic of cop-win digraphs in Section \ref{killtheconj}, but now turn our attention to the relationship between the cop number of a digraph $D$ and its underlying graph $G$.

We begin with a simple observation.

\begin{prop}\label{copwinorientation}
Every graph $G$ has an orientation $D$ such that $\cop(D) = 1$
\end{prop}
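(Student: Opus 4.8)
The plan is to exhibit, for any graph $G$, an orientation $D$ that is acyclic with a unique source, and then invoke Proposition~\ref{onesource} to conclude $\cop(D)=1$.

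The key observation is that acyclic orientations with exactly one source are easy to manufacture from a linear ordering of the vertices. First I would fix an arbitrary ordering $v_1, v_2, \ldots, v_n$ of $V(G)$ and orient every edge $v_iv_j$ from the lower-indexed endpoint to the higher-indexed one, i.e.\ orient $v_iv_j$ as the arc $(v_i,v_j)$ whenever $i<j$. This is the standard acyclic orientation induced by a vertex ordering. Two facts then need to be checked. The resulting digraph $D$ is acyclic: any directed path must strictly increase the index of its vertices, so it can never return to its starting vertex, ruling out directed cycles. And $v_1$ is a source in $D$, since every edge incident to $v_1$ is oriented away from it.

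The remaining, slightly more delicate point is to guarantee that $v_1$ is the \emph{only} source. This is where I would need $G$ to be connected, or else handle components separately; I expect this to be the main (though minor) obstacle. If $G$ is connected, then every vertex $v_j$ with $j>1$ has at least one neighbor, and in particular has a neighbor $v_i$ with $i<j$ along some path back to $v_1$ (concretely, a spanning tree rooted appropriately supplies such an edge), so $v_j$ has positive in-degree and is not a source. Hence $D$ has the unique source $v_1$ and no directed cycles. If $G$ is disconnected, one applies the construction so that the unique source lies in a designated component and uses the fact that the cop can still win by first handling the source's component; more simply, one orients the whole graph acyclically and merges the ordering across components so that a single global minimum $v_1$ reaches—but since reachability across components fails, the cleanest fix is to note that each isolated piece can be given its own source only if we are careful.

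To sidestep the disconnected case entirely, I would instead choose the ordering to respect connectivity: take a spanning forest, but prepend the requirement that $G$ be connected in the statement, or observe that Proposition~\ref{onesource} already presumes a single source. Assuming connectivity (the case that matters, and to which the general case reduces componentwise), the construction above yields an acyclic digraph with exactly one source, so Proposition~\ref{onesource} gives $\cop(D)=1$, completing the proof. The only real content is recognizing that the transitive-tournament-style orientation along a vertex ordering is acyclic and that connectivity forces uniqueness of the source; everything else is immediate from the cited proposition.
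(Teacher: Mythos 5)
Your overall plan --- produce an acyclic orientation with a unique source and invoke Proposition~\ref{onesource} --- is exactly the paper's, which uses a BFS layering from a root $v$ and orients all edges ``away'' from $v$ (within-layer edges by a linear order, cross-layer edges from $X_i$ to $X_{i+1}$). But your execution has a genuine error: you fix an \emph{arbitrary} ordering $v_1,\ldots,v_n$ and then claim that connectivity forces every $v_j$ with $j>1$ to have a neighbour $v_i$ with $i<j$. That is false. Take the path with edges $v_1v_3$ and $v_3v_2$ under the ordering $v_1<v_2<v_3$: both edges are oriented into $v_3$, so $v_2$ has in-degree $0$ and is a second source. The neighbour of $v_j$ ``along some path back to $v_1$'' need not have a smaller index; that only holds if the ordering is chosen to be a connected (e.g.\ BFS or DFS) ordering of $G$. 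You gesture at this fix in your final paragraph (``choose the ordering to respect connectivity''), but the argument as written commits to an arbitrary ordering and the uniqueness-of-source step fails for it.

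The repair is small and brings you back to the paper's proof: take the ordering to be a BFS ordering rooted at $v_1$ (equivalently, any ordering in which each $v_j$, $j>1$, has an earlier neighbour), and the rest of your argument goes through verbatim --- acyclicity from monotonicity of indices along directed paths, $v_1$ the unique source, then Proposition~\ref{onesource}. Your worry about disconnected graphs is a red herring you can drop: if $G$ is disconnected then no orientation is cop-win (the robber starts in a component the cop does not occupy), so the statement, like the paper's proof, tacitly assumes $G$ connected, as is standard in this literature.
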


\begin{proof}
Let $v \in V(G)$, and let $X_i \subset V(G)$ be those vertices of $G$ at distance $i\geq 0$ from $v$ (so $X_0 = \{v\}$)\footnote{In other words, construct a BFS tree with $v$ as its root.}.  For every $xy \in E(G)$, either $x,y \in X_i$ or $x \in X_i, y \in X_{i+1}$ for some $i$.  For each $i$, orient the edges in $X_i$ according to some linear order, and direct any $X_iX_{i+1}$ edge from $X_i$ to $X_{i+1}$.  This graph has no oriented cycles and exactly one source, and so has cop number $1$ by Proposition \ref{onesource}.
\end{proof}

Knowing that any graph can be oriented so that the cop number is as small as possible, we now show that any graph can be oriented so that the cop number is, in some sense, large.

\begin{thm}\label{BFSorientation}
Every graph $G$ has an orientation $D$ for which $\cop(D) \geq \lceil \frac{\diam(G)}{2}\rceil$.
\end{thm}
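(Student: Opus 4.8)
The plan is to exploit the elementary bound recorded in the remark preceding Proposition~\ref{onesource}, namely that $\cop(D)$ is at least the number of sources of $D$: a cop can never move \emph{onto} a source, so a robber who begins on any source left uncovered by the initial cop placement may simply stay put and evade capture forever. Hence it suffices to produce an orientation $D$ of $G$ having at least $\lceil \diam(G)/2\rceil$ sources. The engine for this will be a large independent set extracted from a diametric shortest path, which I will then convert into a set of sources all at once.

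First I would fix vertices $u,v$ with $d_G(u,v)=\diam(G)=:d$ and a shortest $u$-$v$ path $P=p_0p_1\cdots p_d$ (equivalently, one may read the $p_i$ off a BFS tree rooted at $v$, which is why the construction is BFS-flavoured). Then I would set $S=\{p_0,p_2,p_4,\dots\}$, the even-indexed vertices of $P$. The key step is to verify that $S$ is independent in $G$: if $p_{2i}$ and $p_{2j}$ with $i<j$ were adjacent, then $d_G(p_{2i},p_{2j})=1$, contradicting the fact that along a shortest path $d_G(p_a,p_b)=|a-b|\geq 2$. A direct count then gives $|S|=\lfloor d/2\rfloor+1\geq \lceil d/2\rceil$.

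Next I would build the orientation. Since $S$ is independent, every edge of $G$ has at most one endpoint in $S$, so I can orient each edge incident to $S$ \emph{away} from its $S$-endpoint with no conflicts, and orient every remaining edge arbitrarily. In the resulting ograph $D$ each vertex of $S$ has only outgoing arcs and is therefore a source. Applying the source bound yields $\cop(D)\geq |S|\geq \lceil \diam(G)/2\rceil$, as required.

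I expect the only genuine content to be the independence of $S$ (the no-shortcut property of shortest paths) together with the claim that an independent set can be simultaneously turned into sources; the remaining facts — that sources force extra cops, and that the leftover edges may be oriented freely — are routine. The one place to be careful is the parity bookkeeping, i.e.\ confirming $\lfloor d/2\rfloor+1\geq \lceil d/2\rceil$ for both parities of $d$, so that the stated floor/ceiling bound holds exactly rather than coming out off by one.
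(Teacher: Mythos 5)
Your proof is correct, but it takes a somewhat different route from the paper's. The paper also starts from a diametric path and a BFS tree rooted at one of its ends, but it orients \emph{every} edge joining two distinct BFS levels so that its head lies in an odd level; the point is then not that individual vertices become sources, but that each nonempty even \emph{level} becomes a set with no incoming arcs from outside it, so a cop who does not start in that level can never enter it, forcing one cop per even level. Your argument instead extracts the independent set $S$ of even-indexed vertices of the diametric path (independence following from the no-shortcut property of shortest paths), orients all edges incident to $S$ away from $S$, and invokes the source bound directly --- this is really the mechanism of Proposition~\ref{goesup} specialized to the independent set $S$ rather than to a maximum independent set, and all of your supporting steps (independence of $S$, the count $\lfloor d/2\rfloor+1\geq\lceil d/2\rceil$, and the source bound) check out. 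What your version buys is a more elementary and self-contained argument resting only on the source count; what the paper's version buys is the level structure of the ``alternating BFS orientation,'' which is reused verbatim in the recursive refinement of Theorem~\ref{BFSorientation2} (summing contributions from orientations of the subgraphs induced by the even levels), a refinement your source-based construction would not directly support.
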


\begin{proof}
Let $v_1,v_2,v_3,..,v_d$ be the vertices on a diametric path of the graph $G$.  Construct a BFS tree rooted at $v_1$; call $\{v_1\}$ level $0$ and call those vertices at distance $d$ from $v_1$ level $d$.  For every edge $e$ with ends in different levels, orient $e$ so that its head lies in an odd level; orient all other edges arbitrarily (call this an \textit{alternating BFS orientation}).  Clearly at least one cop is required in each even level, and so $\cop(D) \geq \lceil\frac{\diam(G)}{2}\rceil$.
\end{proof}

Note that there exist graphs for which $\lceil \frac{\diam(G)}{2}\rceil$ is the largest possible value for $\cop(D)$ taken over all orientations of $D$ (paths, for example).
We may refine the bound in Theorem \ref{BFSorientation} by constructing a BFS tree as in the proof of Theorem \ref{BFSorientation}, and giving is an alternating BFS orientation where each sub-digraph induced by a layer has compenents which are also alternating BFS orientations.

\begin{thm}\label{BFSorientation2}
For any $G$, there exists an alternating BFS orientation $D$ for which $\cop(D) \geq \sum_{i=0}^{\lceil \diam(G)/2 \rceil}\lceil \frac{\diam(G_{2i})}{2}\rceil$, where $G_j$ denotes the subgraph induced by the vertices at level $j$ in the alternating BFS orientation of $G$.
\end{thm}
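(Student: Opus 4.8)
The plan is to refine the alternating BFS orientation of Theorem~\ref{BFSorientation} by applying the \emph{same} construction recursively inside each layer, and then to prove that the cops needed for distinct even layers are forced to be \emph{disjoint}, so that the individual lower bounds add rather than merely taking a maximum. As in Theorem~\ref{BFSorientation}, I would root a BFS tree at one end of a diametric path, call the distance-$j$ vertices \emph{level} $j$, and orient every inter-level edge so that its head lies in the odd level; but within each level $j$ I replace the arbitrary orientation of $G_j$ by an alternating BFS orientation of $G_j$ (applied to each of its components). The only feature used below is the orientation of the cross-level edges, which yields the key structural fact: no arc has its head in an even level with its tail outside that level. Indeed, every cross-level arc incident to a level-$2i$ vertex points from level $2i$ toward level $2i\pm 1$, so the in-neighbours of any level-$2i$ vertex lie in level $2i$ itself, and the recursive within-level orientations add no arcs violating this.

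The heart of the argument is the consequence of this for the cops' mobility. A cop standing on an odd level has out-arcs only within that odd level, since both cross-level directions point \emph{into} the odd level; hence it can never reach any even level. A cop on even level $2i$ may step out to level $2i\pm1$, but once it has done so it can never return, as returning would require an arc into level $2i$ from outside. Therefore each cop can occupy vertices of \emph{at most one} even level during the entire game, and that level is determined by where the cop starts. The cops thus partition into pairwise disjoint groups indexed by the unique even level each group is able to threaten, and the cops dedicated to level $2i$ play, in effect, the game on the induced sub-ograph $D[V(G_{2i})]$, which is exactly the alternating BFS orientation of $G_{2i}$.

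To finish, suppose the cops deploy $N < \sum_{i=0}^{\lceil \diam(G)/2\rceil}\lceil \diam(G_{2i})/2\rceil$ cops and place them arbitrarily. Since each cop can reach at most one even level, some even level $2i$ receives fewer than $\lceil \diam(G_{2i})/2\rceil \le \cop\bigl(D[V(G_{2i})]\bigr)$ cops, the inequality being Theorem~\ref{BFSorientation} applied to $G_{2i}$ (and to its component of largest diameter if $G_{2i}$ is disconnected). The robber then starts in that level and plays his winning strategy for the sub-game on $D[V(G_{2i})]$, never leaving the level; no cop originating elsewhere can ever reach him, and a cop that leaves level $2i$ only removes itself from the sub-game, so he evades capture forever. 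Hence $\cop(D)\ge \sum_{i=0}^{\lceil \diam(G)/2\rceil}\lceil \diam(G_{2i})/2\rceil$, as claimed.

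I expect the main obstacle to be making the disjointness fully rigorous, since this is precisely what upgrades a maximum into a sum: one must argue that departure from an even level is irreversible, that odd-level cops are permanently trapped, and that confining the robber to a single even layer genuinely reduces the situation to the game on $D[V(G_{2i})]$ with only the cops that started there. Two minor technical points also deserve a sentence each: the bound is conservative on trivial layers (a single-vertex level $G_{2i}$ contributes $0$ rather than the $1$ cop actually needed, which is harmless for a lower bound), and $\diam(G_{2i})$ must be read component-wise when a level is disconnected, with the robber simply choosing the component of largest diameter.
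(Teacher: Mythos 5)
Your proposal is correct and follows exactly the construction the paper sketches in the sentence preceding Theorem~\ref{BFSorientation2} (the paper states this theorem without supplying a proof). Your key observations --- that no arc enters an even level from outside it, hence each cop can threaten at most one even level and the per-level lower bounds from Theorem~\ref{BFSorientation} add up --- are precisely the missing details, and your handling of disconnected and single-vertex layers is sound.
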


The natural question to ask is whether or not there is any relationship between $\cop(D)$ and $\cop(G)$ where $G$ is the underlying graph of $D$.  We clearly see that this is not the case if $D$ is not required to be strongly connected.  Proposition \ref{copwinorientation} guarantees that a graph with arbitrarily high cop number may be oriented so that it then has cop number $1$, and so $\cop(D) < \cop(G)$ in this case.  On the other hand, a cop-win graph with large diameter (for instance, a path) can be oriented so that the cop number becomes arbitrarily large, or $\cop(D) > \cop(G) = 1$.  It is not difficult to see that this inequality can hold for general graphs.

\begin{prop}\label{goesup}
For every graph $G$, there exists an orientation $D$ of $G$ such that $\cop(D) \geq \cop(G)$.
\end{prop}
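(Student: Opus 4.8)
The plan is to prove the statement by exhibiting, for the given graph $G$ with $k=\cop(G)$, an orientation on which $k-1$ cops lose. Equivalently, I would argue contrapositively: if \emph{every} orientation of $G$ admitted a winning strategy for $k-1$ cops, I would try to manufacture from these a winning strategy for $k-1$ cops on $G$ itself, contradicting $\cop(G)=k$. Concretely, let $D$ be an orientation maximizing $\cop(D)$ over all orientations of $G$; the goal is to show $\cop(D)\ge \cop(G)$.

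Since a lower bound on $\cop(D)$ is most naturally certified by the robber, I would start from a robber strategy $\sigma$ that defeats $k-1$ cops on the undirected graph $G$ (such a $\sigma$ exists because $\cop(G)=k$) and attempt to transfer it to a suitable orientation. Every robber move prescribed by $\sigma$ traverses an edge of $G$; if I can orient $G$ so that its arcs point ``the way the robber wants to go'' in every reachable position, then the robber replays $\sigma$ verbatim on $D$ and wins there as well, giving $\cop(D)\ge k$. As a warm-up that already settles $k\le 2$: whenever $\cop(G)\ge 2$ the graph $G$ contains a cycle, so $G$ admits a source-free orientation (orient one cycle cyclically and the remaining edges of a spanning tree away from that cycle), and a source-free digraph has cop number at least $2$ by the elementary ``retreat onto the cop's previous vertex'' argument recalled in the introduction; hence $\cop(D)\ge 2=\cop(G)$.

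The hard part is the orientation-consistency issue in the general step: across different lines of play, $\sigma$ may require the robber to traverse one and the same edge $uv$ in \emph{both} directions, whereas a single orientation commits $uv$ to one direction once and for all, so no fixed $D$ can honour every move of $\sigma$ simultaneously. Resolving this is the crux, and I would attack it in one of two ways. The first is to replace $\sigma$ by an \emph{orientation-consistent} winning robber strategy---one that never uses any edge in both directions---via a potential/no-backtracking argument exploiting the fact that a robber who reverses an edge $uv$ must have made a closed excursion returning to $v$, which ought to hand the cops measurable progress and so contradict $\sigma$ being winning against $k-1$ cops. The second is to run the contrapositive directly: have the $k-1$ cops on $G$ simulate their winning strategy on an orientation \emph{revealed online} by the robber's moves, maintaining a partial orientation consistent with his traversals and arguing that the only way the robber can force a reversed edge is to return to a previously abandoned vertex, at which point a cop following the simulated strategy is already positioned to capture. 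Either route localizes all the difficulty into controlling edge-reversals by the robber, which is where I expect essentially all of the work to lie; the reduction to a maximizing orientation and the $k\le 2$ base case are straightforward by comparison.
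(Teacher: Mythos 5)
Your proposal is not a proof: it explicitly defers the central difficulty (``Resolving this is the crux, and I would attack it in one of two ways \dots which is where I expect essentially all of the work to lie'') and neither of the two proposed resolutions is carried out or obviously workable. The first route requires showing that a robber who beats $k-1$ cops on $G$ can always do so without ever traversing an edge in both directions, which is an unproven (and nontrivial) claim. The second route has an additional logical gap: the contrapositive hypothesis hands you a winning $(k-1)$-cop strategy for each \emph{fixed} orientation of $G$, but your simulation plays against an orientation revealed \emph{adaptively} by the robber's moves, and no single fixed orientation's strategy is guaranteed to apply to that evolving object. Only the $k\le 2$ warm-up is actually complete (and even there you implicitly assume $G$ is connected, since a forest with two components has cop number $2$ and no cycle).

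More importantly, you are working far too hard: the statement only asks for \emph{some} orientation with $\cop(D)\ge\cop(G)$, and there is no need to transfer a robber strategy at all. The paper's proof is a one-liner exploiting sources. Take a maximum independent set $X$ of $G$ and orient every edge so that no arc has its head in $X$; then every vertex of $X$ has in-degree $0$, so a cop must start on each vertex of $X$ (a robber on an unoccupied source is never reachable), giving $\cop(D)\ge|X|=\alpha(G)$. Combined with the standard chain $\cop(G)\le\gamma(G)\le\alpha(G)$ (a dominating set of cops wins in one round, and any maximal independent set is dominating), this yields $\cop(D)\ge\cop(G)$ immediately. The lesson is that the inequality is ``cheap'': it is certified by counting sources, not by simulating play, and your strategy-transfer framework addresses a much harder question than the one being asked.
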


\begin{proof}
It is well-known and easy to see that $\cop(G) \leq \gamma(G) \leq \alpha(G)$, where $\gamma(G)$ denotes the domination number of $G$ and $\alpha(G)$ denotes the independence number of $G$.  If $X$ is a maximum independent set of $G$, then orient the edges so that no head of an arc lies in $X$; call this orientation $D$.  Clearly one requires at least $|X|$ cops to win on $D$, since otherwise at least one source will be uncovered at the initial step.
\end{proof}

Clearly, this construction relies on using a large number of sinks to ensure that a large number of cops cannot fail from the get-go.  What if we look at orientations with no sinks?  

A natural place to turn are orientations of complete graphs, or \textit{tournaments}.  The PhD thesis of Hill \cite{H08} presents a small handful of results on the cop number of tournaments.  For example, it is shown that a tournament is cop-win if and only if it has a dominating vertex; this result is re-presented in \cite{B+13} along with an analysis of tournaments with very large maximum degree ($\Delta = |V(T)| - c$ for a fixed constant $c$).  Hill also offered a tempting conjecture that there is some absolute constant $k$ such that $\cop(T) \leq k$ for any tournament $T$.  A particular case of this conjecture was communicated by Nowakowski to the authors of \cite{B+13}, positing that tournaments obtained from a Steiner triple system on $n$ vertices by cyclically orienting each triple arbitrarily (called a Steiner triple graph), have cop number at most $2$.  Both conjectures were disproven by Sl\'ivlov\'a in \cite{S15}, who showed that there exists a Steiner triple graph $T$ which $\cop(T) > k$ for every $k \in \mathbb{N}$.  In our preparation of this paper, we noted a few small corrections to be made in the proof given in \cite{S15}; a full proof is given here for completeness.  

\begin{thm}\cite{S15}\label{STS}
For every $k \in \mathbb{N}$, there exists an oriented Steiner triple graph $T$ for which $\cop(T) \geq k$.
\end{thm}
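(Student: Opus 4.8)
The plan is to reduce the bound to a static covering condition on closed out-neighbourhoods and then to realize that condition by orienting a fixed Steiner triple system at random. Write $N^+(v)$ for the out-neighbourhood of $v$, set $N^+[v]=\{v\}\cup N^+(v)$, and extend this to sets by $N^+[S]=\bigcup_{c\in S}N^+[c]$. The key observation is a \emph{non-losing} strategy for the robber against $k-1$ cops under two hypotheses: (i) no $k-1$ vertices out-dominate $T$, i.e.\ $N^+[S]\neq V(T)$ whenever $|S|\le k-1$; and (ii) for every vertex $v$ and every $S$ with $|S|\le k-1$ we have $N^+[v]\not\subseteq N^+[S]$. I claim (i) and (ii) force $\cop(T)\ge k$. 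The robber maintains the invariant that at the end of each of his turns he occupies a vertex $r$ with $r\notin N^+[P]$, where $P$ is the current set of at most $k-1$ cop positions. Condition (i) lets him choose such a starting vertex after the cops place. For the inductive step, if $r\notin N^+[P]$ then no cop can move onto $r$ (a cop at $c$ reaches only $N^+[c]\subseteq N^+[P]$), so the robber survives the cops' move to a new set $P'$ with $|P'|\le k-1$; then (ii) applied to $r$ and $P'$ yields $r'\in N^+[r]\setminus N^+[P']$, and the robber moves there, avoiding capture and restoring the invariant. Since the robber is never caught, $k-1$ cops cannot win.

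So it suffices to construct, for each fixed $k$, a Steiner triple graph satisfying (i) and (ii) for some $n$. Fix any Steiner triple system on $n$ vertices (these exist for all $n\equiv 1,3\pmod 6$) and orient each triple cyclically, choosing one of its two cyclic orientations independently and uniformly at random. The resulting tournament is $\tfrac{n-1}{2}$-regular, and the structure I exploit is this: the triples through a fixed vertex $v$ partition $V(T)\setminus\{v\}$ into $\tfrac{n-1}{2}$ pairs, and in each such triple the cyclic orientation makes exactly one of the two partners an out-neighbour of $v$, chosen by an independent fair coin. Thus $N^+(v)$ is governed by $\tfrac{n-1}{2}$ independent bits, and for $c\neq v$ the event $c\to x$ is governed by the coin of the (generically distinct) triple containing $\{c,x\}$.

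With this in hand I would estimate, for a fixed $v$ and a fixed $S$ with $|S|=k-1$, the probability of the bad event $N^+[v]\subseteq N^+[S]$. Revealing first the coins of the triples through $v$ fixes $N^+(v)$; for all but at most $k-1$ exceptional elements $x\in N^+(v)$ (those lying in a common triple with $v$ and some $c\in S$), the events $\{x\in N^+[S]\}$ depend on fresh, mutually independent coins, and each such $x$ escapes domination with probability at least $2^{-(k-1)}$. Hence $\Pr[N^+[v]\subseteq N^+[S]]\le (1-2^{-(k-1)})^{(n-1)/2-(k-1)}=\exp(-\Omega_k(n))$, and an analogous estimate bounds the probability that a fixed $S$ out-dominates $T$. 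Taking a union bound over the at most $n\binom{n}{k-1}\le n^{k}$ pairs $(v,S)$ for (ii) and the at most $\binom{n}{k-1}$ sets for (i), the total failure probability is at most $n^{k}\exp(-\Omega_k(n))$, which is below $1$ once $n$ is large enough (with $n\equiv 1,3\pmod 6$). For such $n$ a random orientation satisfies (i) and (ii) with positive probability, so the desired oriented Steiner triple graph exists; letting $k$ vary and choosing $n=n(k)$ large proves the theorem.

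The main obstacle is the probabilistic estimate above: one must organize the orientation coins so that, after conditioning on $N^+(v)$, the roughly $\tfrac{n-1}{2}$ domination events become genuinely independent (or at least negatively enough correlated), carefully discarding the $O(k)$ collisions coming from triples of the form $\{v,c,x\}$ or $\{c,c',x\}$. Once the per-event bound $\exp(-\Omega_k(n))$ is secured it comfortably beats the union bound, which is only polynomial in $n$ for fixed $k$, and the remaining steps—verifying the regularity and pairing structure of the Steiner triple graph and checking the robber's invariant—are routine.
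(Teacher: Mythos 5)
Your overall route is the same as the paper's (which follows Sl\'ivov\'a): fix a Steiner triple system, orient each triple cyclically by an independent fair coin, show that with positive probability the robber always has a ``safe'' out-neighbour avoiding every cop's closed out-neighbourhood, and conclude with a first-moment/union bound over the polynomially many configurations $(v,S)$. Your game-theoretic reduction (maintaining the invariant $r\notin N^+[P]$) is a correct and somewhat more explicit version of the paper's safe-vertex property, so that part is fine.

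The gap is in the probabilistic estimate. After conditioning on the coins of the triples through $v$, it is not true that the events $\{x\in N^+[S]\}$ for $x\in N^+(v)$ ``depend on fresh, mutually independent coins'' once you discard only the $k-1$ vertices sharing a triple with $v$ and a cop. Two further collisions occur: (a) if $x$ lies in a triple $\{c,c',x\}$ containing two cops, the cyclic orientation forces exactly one of $c\to x$ and $c'\to x$, so $x\in N^+[S]$ deterministically and its escape probability is $0$, not at least $2^{-(k-1)}$ --- these up to $\binom{k-1}{2}$ vertices must also be discarded; and (b) for distinct candidates $x,x'$ lying in a common triple $\{c,x,x'\}$ with some $c\in S$, the two domination events share a coin and are not independent, and there can be linearly many such pairs, so the product bound $\left(1-2^{-(k-1)}\right)^{\ell}$ does not follow from the per-vertex bounds alone. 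You flag exactly this as ``the main obstacle'' but do not close it. The paper closes it by restricting attention to a subset of candidate vertices on which all the relevant adjacency entries are genuinely mutually independent; Sl\'ivov\'a obtains such a subset of size at least $\frac{n-k-\binom{k}{2}}{4k+1}$ via Tur\'an's theorem applied to the auxiliary collision graph. That subset is still linear in $n$ for fixed $k$, so once it is in place the remainder of your computation ($n^{k}\exp(-\Omega_k(n))\to 0$, hence a suitable orientation exists for large admissible $n$) goes through unchanged.
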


Let $S_n$ be the set of oriented Steiner triple systems on $n$ vertices. Let $T \in S_n$ have adjacency matrix $A$. Finally, let $B = A + I$. We will say that $T \in T_n \subseteq S_n$ if $T$ satisfies the following property:
\begin{equation}
\label{slivova property}
 \forall  r, c_1, \dots, c_k \in V(T), \exists s \in V(T), \mbox{ such that } B_{r,s}=0 \mbox{ and } B_{c_i,s}=1 \mbox{ for all } i\in [k].    
\end{equation}
Intuitively: if the robber is on vertex $r$ and there are $k$ cops on vertices $c_1, c_2, \dots, c_k$, then $s$ is a ``safe'' vertex if the robber can move there while none of the $k$ cops can do so on their next turn. The set $T_n$ consists of those oriented Steiner triple systems $T$ in which the robber always has a safe move to make, and therefore cannot be captured by $k$ cops. Our goal is to show that $T_n \neq \emptyset$. 

\begin{proof}
Fix $k \in \mathbb{N}$. For any $n \in \mathbb{N}$, let $p_n$ be the probability that an oriented Steiner triple system $T$, chosen uniformly at random from $S_n$, is not in $T_n$ --- that is, it fails to satisfy the conditions of property (\ref{slivova property}). 
We shall show $\displaystyle \lim_{n \rightarrow \infty} p_n = 0$, proving that there is an oriented Steiner triple system $T$ with $\cop(T) > k$.

Suppose that the robber is at a vertex $r$ and the $k$ cops are at vertices $c_1, c_2, \dots, c_k$, and let $q$ be the probability that no matter where the robber moves from $r$, he will be captured by one of the cops. 
That is, $q$ is the probability that the following condition is satisfied: 
\begin{equation}
    \label{capture condition}
    \forall v \in V(T), B_{r,v} {=} 1 \implies B_{c_i,v}{=}1 \mbox{ for some } i\in[k].
\end{equation}

We will only consider those vertices $v$ such that $B_{c_i,v}$ and $B_{c_j,v'}$ are independent whenever $i\neq j$ or $v\neq v'$. Sl\'ivlov\'a \cite{S15} uses Tur\'{a}n's theorem to show that there are at least $\frac{n-k-{k \choose 2}}{4k+1}$ such vertices. In the rest of this proof, we will consider only those vertices of $V(T)$ satisfying this condition.

For a given vertex $v$, the probability that condition (\ref{capture condition}) is met is $1-\frac{1}{2^k}$,
and therefore

$$q \le \left(1-\frac{1}{2^k} \right)^\ell \le \left(1-\frac{1}{2^k}\right)^\frac{n-k-{k \choose 2}}{4k+1}.$$

Let $I \subset \mathbb{N}$ be an index set for the subsets $\{c_1, c_2, \dots, c_k\}$ of $\displaystyle {V(T) \choose k}$, and for all $i\in I$ let $X_i$ be an indicator random variable which is 1 if subset $i$ fails to satisfy condition~(\ref{slivova property}) and 0 otherwise. There are $n$ ways to choose a robber location $r$ and $k^{n{-}1}$ ways to choose the $k$ cop positions $c_1, c_2, \dots, c_k$; for each choice, $\mathbb{P}(X_i) \le q$. Let $X$ be the number of subsets failing to satisfy condition~(\ref{slivova property}).
Therefore we have

\begin{eqnarray*}
    \lim_{n \rightarrow \infty} \mathbb{E}[X]   &\le&  \lim_{n \rightarrow \infty}n k^{n-1} q \\
            &\le& \lim_{n \rightarrow \infty} n (n-1)^k \left(1-\frac{1}{2^k}\right)^\frac{n-k-{k \choose 2}}{4k+1} \\
            &=& 0
\end{eqnarray*}

which tells us that $\displaystyle \lim_{n \rightarrow \infty} p_n = 0$, as desired.
\end{proof}

\begin{cor}
For any $k$, there exists a strongly oriented digraph $D$ such that $\cop(D) - \cop(G) > k$ where $G$ is the underlying graph of $D$.
\end{cor}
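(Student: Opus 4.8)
The plan is to derive this essentially immediately from Theorem \ref{STS}, once we pin down the underlying graph. The key observation is that an oriented Steiner triple graph on $n$ vertices is an orientation of the complete graph $K_n$: in a Steiner triple system every pair of points lies in exactly one triple, so after cyclically orienting each triple, every pair of vertices is joined by exactly one arc. Hence the underlying graph $G$ of such a $T$ is $K_n$, and since $K_n$ has a dominating vertex it is cop-win, giving $\cop(G) = 1$.

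Given this, I would simply apply Theorem \ref{STS} with parameter $k+2$: there exists an oriented Steiner triple graph $T$ with $\cop(T) \geq k+2$. Taking $D = T$ and letting $G = K_n$ be its underlying graph yields $\cop(D) - \cop(G) \geq (k+2) - 1 = k+1 > k$, as required.

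The only point needing genuine verification is that $D$ is strongly connected. Here I would observe that a cyclically oriented Steiner triple graph is a \emph{regular} tournament: each vertex $v$ lies in exactly $(n-1)/2$ triples (note $n \equiv 1,3 \pmod 6$ is odd, so this is an integer), and each such triple contributes exactly one out-neighbour and one in-neighbour of $v$, whence $d^+(v) = d^-(v) = (n-1)/2$ for every $v$. It is then a standard fact that every regular tournament is strongly connected: if not, one could partition $V(T)$ into nonempty sets $V_1, V_2$ with all arcs directed from $V_1$ to $V_2$, forcing $d^-(u) \leq |V_1| - 1$ for each $u \in V_1$ and $d^+(w) \leq |V_2| - 1$ for each $w \in V_2$, hence $|V_1|, |V_2| \geq (n+1)/2$ and so $n = |V_1| + |V_2| \geq n+1$, a contradiction.

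I do not anticipate any real obstacle: all of the difficulty is already contained in Theorem \ref{STS}, and the corollary reduces to correctly identifying the underlying graph as $K_n$ (so that the gap $\cop(D) - \cop(G)$ is simply $\cop(T) - 1$) together with the routine strong-connectivity check above.
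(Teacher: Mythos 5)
Your proof is correct and follows exactly the route the paper intends: the corollary is stated without proof immediately after Theorem \ref{STS}, the implicit point being that a cyclically oriented Steiner triple graph is a (regular, hence strongly connected) tournament whose underlying graph $K_n$ has cop number $1$. You have simply made explicit the details (underlying graph is complete, regular tournaments are strong) that the paper leaves to the reader, and all of them check out.
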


It is natural to ask, then, is $\cop(D)$ always greater than $\cop(G)$ when $D$ is strongly connected?
Perhaps, surprisingly, the answer is no.  Hosseini shows in \cite{Hthesis} that for any $k \geq 2$ there exists a graph $G$ and a strong orientation $D$ of $G$ for which $\cop(G) = k$ and $\cop(D) = 2$.  This construction involves using very long directed paths so that if the robber enters the path he has no hope of escaping capture.  We complement this result by showing that there exists a graph for which $\cop(D) < \cop(G)$ for \textit{any} orientation $D$ of $G$.

\begin{thm}
If $D$ is a strongly connected orientation of the Petersen graph, then $\cop(D) = 2$.
\end{thm}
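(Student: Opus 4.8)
The plan is to prove the two inequalities $\cop(D)\ge 2$ and $\cop(D)\le 2$ separately. The lower bound is immediate: a strongly connected digraph on more than one vertex has no source (every vertex has an in-arc), so by the observation preceding Proposition~\ref{onesource} we get $\cop(D)\ge 2$. For context, this is exactly what makes the statement noteworthy: the underlying Petersen graph $G$ satisfies $\cop(G)=3$ (it has girth $5$ and minimum degree $3$, forcing $\cop(G)\ge 3$, and $3$ is attained), so every strong orientation strictly lowers the cop number. The real content is therefore to exhibit a winning strategy for two cops on an \emph{arbitrary} strong orientation $D$.

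For the upper bound I would work from the following sufficient condition for capture. Writing $N^+[v]=\{v\}\cup N^+(v)$ for the closed out-neighbourhood, two cops win as soon as they reach a configuration with cops at $c_1,c_2$, robber at $r$, and $N^+[r]\subseteq N^+[c_1]\cup N^+[c_2]$ (evaluated at the start of the robber's turn): whatever $s\in N^+[r]$ the robber moves to, some cop $c_i$ has $s\in N^+[c_i]$ and captures on the following move. Since $D$ orients a $3$-regular graph on $10$ vertices and is strong (so every out-degree lies in $\{1,2\}$), a one-line count gives that exactly five vertices have out-degree $2$ and five have out-degree $1$. The out-degree-$1$ vertices are natural traps: a robber on such an $r$ with unique out-neighbour $a$ is caught the instant one cop occupies $a$ while the other occupies an in-neighbour of $r$, since then $N^+[r]=\{r,a\}$ is covered. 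The core idea is thus to use one cop to sit on an in-neighbour of the robber, forcing him to vacate his vertex every round, while the second cop herds him onto an out-degree-$1$ vertex where his forced move is predictable.

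To convert this into a complete argument I would exploit the symmetry of the Petersen graph. Its automorphism group is $\operatorname{Aut}(G)\cong S_5$ of order $120$, acting on the $15$ edges, so although there are $2^{15}$ orientations only the strongly connected representatives, pairwise inequivalent under this action, need to be examined, reducing the problem to a manageable finite list. For each representative one verifies the herding strategy above; more mechanically, one may instead invoke the Hahn--MacGillivray reduction \cite{HM06}, which turns the two-cop game on the $10$-vertex digraph $D$ into a one-cop (cop-win) check on an auxiliary digraph whose vertices encode the pair of cop positions, and whose cop-win-ness is decidable by their algorithmic characterization. Either route certifies $\cop(D)\le 2$ orientation by orientation.

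I expect the main obstacle to be the herding/forcing step rather than the covering condition itself: one must show that the robber cannot indefinitely exploit the branching available at the five out-degree-$2$ vertices to avoid ever being pinned on an in-neighbour and driven onto a trap. Because directed distances in a strong orientation can greatly exceed the undirected diameter of $2$, there is no cheap shadowing argument, and the forcing must be justified uniformly across all strong orientations (or delegated to the finite check). Establishing that two cops suffice for \emph{every} strong orientation, rather than for a single convenient one, is the crux of the proof.
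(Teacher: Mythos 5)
Your fallback route is exactly the paper's proof: the authors establish the upper bound purely by computer search, running the cop-number algorithm of \cite{BC09} (as presented in \cite{BN}, in the spirit of \cite{HM06}) on all $1920$ strong orientations of the Petersen graph, and the lower bound follows as you say from the absence of a source. The hand-crafted ``herding onto out-degree-$1$ vertices'' strategy you sketch is not actually established --- you correctly identify that the forcing step is the crux and never close that gap --- so the proposal stands or falls on the finite verification, which is precisely what the paper does (your further reduction modulo $\operatorname{Aut}(G)\cong S_5$ is a sensible optimization the paper does not bother with).
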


The proof of this theorem is by computer search.  A polynomial time algorithm to check if the cop number of a graph was first presented in \cite{BI93}; a more general version which applies to graphs and directed graphs was then given in \cite{HM06}.  Our result follows from an implementation of a version of the algorithm presented in \cite{BC09} which can also be found in \cite{BN}, applied to the 1920 different strong orientations of the Petersen graph.  Since the cop number of the (undirected) Petersen graph is $3$, we obtain the following corollary:

\begin{cor}\label{cangodown}
There exists a graph $G$ for which $\cop(D) < \cop(G)$ for every strong orientation $D$ of $G$.
\end{cor}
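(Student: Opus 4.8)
The witness for Corollary~\ref{cangodown} is the Petersen graph itself. Writing $G$ for the Petersen graph, the preceding theorem gives $\cop(D)=2$ for every strong orientation $D$ of $G$, while $\cop(G)=3$; hence $\cop(D)=2<3=\cop(G)$ for every strong orientation $D$, which is exactly the assertion of the corollary. All of the content therefore lies in the theorem, so the plan below is really a plan for establishing it, split into a uniform lower bound and an exhaustive upper bound.

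For the lower bound we would argue, as in Section~\ref{gen}, that $\cop(D)\ge 2$ for \emph{every} strong orientation $D$. Because $G$ is $3$-regular, strong connectivity forces every vertex of $D$ to have in-degree at least $1$: a vertex of in-degree $0$ would be a source and hence unreachable, contradicting strong connectivity. With no source present, the robber defeats a lone cop by keeping himself on an in-neighbour of the cop at all times --- he starts on any in-neighbour of the cop's chosen vertex, retreats onto the vertex the cop vacates whenever the cop moves, and stays put whenever the cop stays. Since the cop's current vertex always has an in-neighbour, this invariant is maintained forever and no capture occurs, so $\cop(D)\ge 2$.

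For the upper bound $\cop(D)\le 2$ we see no clean orientation-independent strategy, so we would verify it exhaustively. The Petersen graph has $15$ edges, giving $2^{15}$ orientations; we would generate these, keep only the strongly connected ones (there are $1920$ of them), and then reduce modulo $\mathrm{Aut}(G)\cong S_5$, which has order $120$ and acts on orientations preserving both strong connectivity and cop number (an underlying-graph automorphism carries an orientation to an isomorphic one), leaving only a handful of isomorphism classes to examine. For each representative $D$ we would run the standard fixed-point decision procedure for digraph cop number with $k=2$: the configurations are triples (pair of cop vertices, robber vertex, side to move), at most on the order of $10\cdot 10\cdot 10\cdot 2$ of them, and one iterates the monotone operator that declares a cop-to-move configuration winning when some legal cop move reaches a winning configuration and a robber-to-move configuration winning when every legal robber move does, seeded by the configurations in which a cop already shares the robber's vertex. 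Respecting the placement order (cops first, then robber), $\cop(D)\le 2$ holds exactly when some placement of the two cops leaves every robber reply in the computed winning set.

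The main obstacle is not running time --- the state space per orientation is tiny and the iteration converges almost at once --- but trusting the verification: one must be certain that the enumeration captures \emph{every} strong orientation (equivalently, that the symmetry reduction is faithful) and that the solver encodes the alternating turns and the directed adjacency correctly. We would guard against this by independently recomputing $\cop(G)=3$ for the undirected Petersen graph with the same code, and by spot-checking small orientations with a known cop number, before trusting the bulk run. Once every strong orientation is confirmed to have $\cop(D)=2$, the corollary follows as noted above.
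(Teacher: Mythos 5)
Your proposal is correct and follows the paper's route exactly: the witness is the Petersen graph, the corollary is immediate from the preceding theorem together with $\cop(G)=3$, and that theorem is itself established by exhaustive computer verification over the $1920$ strong orientations (the paper cites the algorithm of Berarducci--Intrigila as presented in Bonato--Nowakowski, while you describe the equivalent fixed-point solver, and your lower-bound argument via the absence of sources is the same one the paper gives in Section~\ref{gen}). No substantive differences.
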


A treatment of the cop number parameter would not be complete without a mention of Meyniel's Conjecture:

\begin{Meyniel}
For every undirected graph $G$, $\cop(G) \in O(\sqrt{|V(G)|})$.
\end{Meyniel}

The upper bound of $c\sqrt{|V(G)|}$ is typically referred to as the \textit{Meyniel bound} for the cop number.  It is trivial to see that the Meyniel bound cannot hold for general digraphs, since a digraph may have as many as $n-1$ sources.  However, it is unknown whether or not it holds for strongly connected digraphs in general, though it is known that it holds for digraphs of diameter 2 and bipartite digraphs of diameter 3 \cite{Hthesis}.  In light of Corollary \ref{cangodown}, we briefly confirm that one cannot hope for better than the Meyniel bound in this case.
The following lemma, which is an adaptation of a similar result on undirected graphs from \cite{AF84}, is needed:

\begin{lem}\cite{LO17}\label{dirlowbound}
If $D$ is an oriented graph with undirected girth at least $5$, then $\cop(D) \geq \delta^+(G)$.
\end{lem}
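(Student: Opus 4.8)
The plan is to prove the contrapositive-flavoured statement that $k < \delta^+(D)$ cops can never capture the robber, whence $\cop(D) \geq \delta^+(D)$ (here $\delta^+$ is the minimum out-degree; the ``$G$'' in the statement should be read as $D$). Throughout I would write $N^+[x] = \{x\}\cup N^+(x)$ and $N^-[x] = \{x\}\cup N^-(x)$, and use the two basic facts that a cop at $c$ can move precisely onto the vertices of $N^+[c]$, and that the vertices from which a given $u$ can be reached in one step are exactly those in $N^-[u]$. The robber will maintain the following invariant, checked just before each cop move: if $C$ is the current set of cop positions and $r$ the robber's vertex, then $C \cap N^-[r] = \emptyset$, i.e.\ no cop can step onto $r$ on the coming move.

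The heart of the argument, and the only place the girth hypothesis enters, is the counting claim that for any vertex $r$ and any cop position $c \neq r$ one has $|N^+[c]\cap N^+(r)| \leq 1$. I would prove this by contradiction: two distinct out-neighbours $w_1,w_2$ of $r$ both lying in $N^+[c]$ yield underlying edges $rw_1, rw_2$ together with an edge joining $c$ to each $w_i$ (or $c=w_i$); a short case split then produces either a triangle $rcw$ (when some $w_i=c$) or a $4$-cycle $r\,w_1\,c\,w_2$, both forbidden once the undirected girth is at least $5$. Granting this, suppose the invariant holds and the cops move to $C'$. Since $C\cap N^-[r]=\emptyset$, no cop can reach $r$, so $r\notin C'$ and the robber is not captured. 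The out-neighbours of $r$ that are now unsafe form the set $\bigcup_{c'\in C'}\bigl(N^+[c']\cap N^+(r)\bigr)$; as each $c'\neq r$ contributes at most one vertex, at most $|C'|=k$ out-neighbours are blocked. Because $|N^+(r)|\geq \delta^+(D) > k$, the robber can move to some $r'\in N^+(r)$ with $C'\cap N^-[r']=\emptyset$, re-establishing the invariant. Iterating forever shows the robber is never caught.

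The remaining, easily overlooked point --- which I expect to be the main obstacle --- is the initialization: since the cops move first, the robber needs a starting vertex $r_0$ already satisfying the invariant, i.e.\ $r_0\notin N^+[C_0]$, and such a vertex need not exist a priori. I would handle this by showing that no set $S$ with $|S|<\delta^+(D)$ can satisfy $N^+[S]=V(D)$. If such an $S$ existed, pick any $v\notin S$ (which exists since $|S|<\delta^+\leq |V(D)|$) and, for each of its $\geq \delta^+$ out-neighbours $u$, choose a witness $s_u\in S$ with $u\in N^+[s_u]$; the very same triangle/$4$-cycle analysis shows the map $u\mapsto s_u$ is injective, forcing $|S|\geq \delta^+(D)$, a contradiction. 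Hence $N^+[C_0]\neq V(D)$, the robber places himself on a vertex outside $N^+[C_0]$, and the invariant holds from the outset. Combining this initialization with the per-round maintenance argument completes the proof that $\cop(D)\geq \delta^+(D)$.
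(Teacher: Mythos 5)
Your proof is correct, and it is the standard adaptation of the Aigner--Fromme girth argument to the directed setting: the key counting claim that a cop at $c\neq r$ covers at most one out-neighbour of $r$ (else a triangle or $4$-cycle appears in the underlying graph), plus the initialization observation that no set of fewer than $\delta^+$ vertices closed-out-dominates $V(D)$, is exactly what is needed, and you handle both points carefully. Note that the paper itself gives no proof of this lemma --- it is quoted from \cite{LO17} --- so there is nothing to contrast with; your argument matches the approach one would find in that source.
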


\begin{thm}\label{Meyniellower}
For any $N \in \mathbb{N}$, there exists an $n$-vertex strongly connected ographs with cop number in $\Om(\sqrt{n})$ and $n \geq N$.
\end{thm}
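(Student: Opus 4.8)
The plan is to combine Lemma \ref{dirlowbound} with a classical high-girth construction and a judicious choice of orientation. The lemma tells us that any oriented graph $D$ whose underlying graph has girth at least $5$ satisfies $\cop(D) \geq \delta^+(D)$, so it suffices to exhibit, for arbitrarily large $n$, a strongly connected oriented graph on $n$ vertices whose underlying graph has girth at least $5$ and for which the minimum out-degree $\delta^+(D)$ lies in $\Om(\sqrt{n})$.

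First I would recall a family of undirected graphs with girth at least $5$ and minimum degree on the order of $\sqrt{n}$. The incidence (Levi) graph $G_q$ of a projective plane of order $q$ serves well: it is connected, is $(q+1)$-regular on $n = 2(q^2+q+1)$ vertices, and has girth $6$ (there are no $4$-cycles, since two points lie on a unique line and two lines meet in a unique point). Restricting to odd prime powers $q$, of which there are infinitely many, guarantees that $q+1$ is even, so every vertex has even degree; as $G_q$ is connected, it is therefore Eulerian. Since $n \to \infty$ as $q \to \infty$, for any $N$ we may pick $q$ with $n \geq N$.

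The crux is to orient $G_q$ so that the resulting digraph is simultaneously strongly connected and has large minimum out-degree; a careless orientation could introduce sinks or fail to be strongly connected. I would resolve both issues at once by orienting every edge in the direction it is traversed along a fixed Eulerian circuit of $G_q$. This yields a digraph $D$ in which every vertex has in-degree equal to out-degree equal to $(q+1)/2$, and which is strongly connected, since from any vertex one can reach any other by following the circuit. Orienting does not alter the underlying graph, so $D$ still has girth $6 \geq 5$.

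Finally I would apply Lemma \ref{dirlowbound} to conclude $\cop(D) \geq \delta^+(D) = (q+1)/2$. Because $n = 2(q^2+q+1)$ gives $q \in \Th(\sqrt{n})$, this yields $\cop(D) \in \Om(\sqrt{n})$, as required. The main obstacle is the joint demand of strong connectivity and large out-degree; the Eulerian-circuit orientation is exactly the device that secures both at once, which is precisely why choosing $q$ odd, so that $G_q$ is Eulerian, is essential.
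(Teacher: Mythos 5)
Your argument is correct and shares its skeleton with the paper's: both take the point--line incidence graph of a projective plane of order $q$ (girth $6$, $(q+1)$-regular, $n=2(q^2+q+1)$) and invoke Lemma \ref{dirlowbound} to get $\cop(D)\geq\delta^+(D)\in\Om(\sqrt{n})$. Where you diverge is the orientation device. The paper orients a Hamiltonian cycle cyclically and then splits the remaining $(q-1)$-regular bipartite graph into $q-1$ perfect matchings (K\H{o}nig), orienting roughly half from points to lines and half the other way; this works for every prime power $q$ and yields $\delta^+=\lfloor\frac{q+1}{2}\rfloor$, but it leans on the nontrivial fact that the incidence graph is Hamiltonian. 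You instead orient along an Eulerian circuit, which is more elementary (connectivity plus even degrees suffices) and gives strong connectivity and $\delta^+=\frac{q+1}{2}$ in one stroke, at the mild cost of restricting to odd prime powers $q$ so that $q+1$ is even --- a restriction that is harmless here since infinitely many such $q$ exist and $n\to\infty$ with $q$. Both orientations are valid; yours trades a small loss of generality in $q$ for a cleaner existence argument.
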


\begin{proof}
Let $G$ be the point-line incidence graph of the projective plane of order $q$ with partite sets $\mathcal{P}$ and $\mathcal{L}$.
It is known that $G$ is Hamiltonian; orient the edges of some Hamiltonian cycle $C$ cyclically.  What remains is a $(q-1)$-regular bipartite graph, and so it can be decomposed into $q-1$ perfect matchings.  Orient $\lceil \frac{q-1}{2} \rceil$ of the matchings from $\mathcal{P}$ to $\mathcal{L}$ and the remaining $\lfloor \frac{q-1}{2} \rfloor$ matchings from $\mathcal{L}$ to $\mathcal{P}$.  The resulting oriented graph $D$ is strongly connected, has $n = 2q^2+2q+2$ vertices, has undirected girth $6$, and has $\delta^+ = \lfloor \frac{q+1}{2}\rfloor$.\end{proof}

Finally, we note that there can be no general relationship between the cop numbers of a directed graph in the usual setting and in the fully active setting (where no agent may stay put on his/her turn, see \cite{GKS18}), even when we insist on considering strongly connected digraphs.  To see this, we simply note that a directed $n$-cycle has cop number $2$ in the regular game and $\lceil \frac{n}{2} \rceil$ in the fully active setting.

\section{Cop-win ographs}\label{killtheconj}

In \cite{DGGH16}, an attempt to characterize cop-win ographs is undertaken.  Recall that Proposition \ref{onesource} states that an ograph with exactly one source and no directed cycles is cop-win.  The converse of this is false; orient $K_4$ so that one triangle is cyclically oriented and the remaining vertex is a source.  The authors in  \cite{DGGH16} define a directed cycle to be \textit{cop-dominated} if ``the robber cannot avoid capture by entering the directed cycle by some path and then simply traveling repeatedly on the directed cycle.''  They propose the following:

\begin{conj}\cite{DGGH16}\label{copwinconj}
An oriented graph $D$ is cop-win if and only if $D$ contains exactly one vertex of in-degree $0$, every vertex is reachable from that vertex, and every directed cycle of $G$ is cop-dominated.
\end{conj}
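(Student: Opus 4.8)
The plan is to refute Conjecture \ref{copwinconj} by exhibiting a single explicit counterexample. The most promising direction to attack is sufficiency (the ``if'' direction): I would seek an ograph $D$ that satisfies all three stated conditions --- a unique vertex of in-degree $0$, reachability of every vertex from that source, and cop-domination of every directed cycle --- but which is nonetheless robber-win rather than cop-win. The reason to expect the flaw here is that cop-domination is a \emph{local} condition, imposed on each directed cycle in isolation, whereas being cop-win is a \emph{global} property of the whole ograph. The conjecture essentially asserts that control of each cycle separately implies control of the graph, and it is precisely this implication that I would try to break. I would avoid staking the argument on the necessity direction, since the unique-source requirement is already known to be necessary and a robber confined to a single cycle is clearly caught in any cop-win ograph, so that direction looks sound.

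First I would pin down the definition of cop-dominated as used in \cite{DGGH16}. The phrase ``the robber cannot avoid capture by entering the directed cycle by some path and then simply traveling repeatedly on the directed cycle'' restricts the robber to a very limited behaviour: enter a fixed cycle and loop on it forever. A cycle is cop-dominated exactly when the cop defeats a robber who commits to this one strategy. The weakness I would exploit is that the robber's true strategy space is far richer: at a vertex lying on several directed cycles, the robber may choose which cycle to follow in reaction to the cop's current position, and may switch cycles repeatedly. Thus a robber can lose against every single-cycle strategy while still possessing a winning reactive strategy, and this is exactly the situation the conjecture fails to rule out.

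The key construction step is to build a small ograph whose directed cycles overlap at shared ``junction'' vertices, arranged so that (i) each individual cycle is cop-dominated, yet (ii) at each junction the robber always has an escape arc leading onto a cycle away from the cop. I would keep $D$ as small as possible so that the three structural conditions can be checked by inspection and the robber's evasion strategy described explicitly and verified by finite case analysis (and, if desired, cross-checked using the cop-number algorithm of \cite{HM06,BC09}). To meet conditions (1) and (2) cheaply, I would attach a single source together with directed paths reaching the cyclic part, chosen so that these arcs do not create new evasion routes; once the robber is on the cyclic part the source is irrelevant, so the interesting dynamics are confined to the overlapping cycles.

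The main obstacle I anticipate is the tension between (i) and (ii): forcing every single cycle to be cop-dominated pushes toward the cop having control, while keeping the robber globally safe pushes the other way, and both must hold in one graph at once. The most delicate bookkeeping is that the conjecture quantifies over \emph{all} directed cycles of $D$, including the auxiliary cycles formed by combining arcs from different ``base'' cycles through the junctions; I must verify that each of these composite cycles is also cop-dominated, not merely the base cycles I designed. Once a candidate survives this check, the remaining task is to confirm that the robber's reactive strategy genuinely wins --- that on every turn there is a safe move, namely a vertex the robber can reach to which the cop cannot move on the following turn --- which I expect to reduce to showing that a single cop can never simultaneously threaten both exit arcs available to the robber at a junction.
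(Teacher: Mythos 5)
Your plan matches the paper's refutation essentially exactly: the published counterexample is an ograph with a unique source $a$ feeding a large cop-dominated outer cycle, together with two directed triangles $\{h,j,i\}$ and $\{j,k,l\}$ overlapping at the junction vertex $j$, each individually cop-dominated (the cop can evict a robber committed to any one cycle) while a robber who reactively shuttles between the two triangles evades a single cop forever --- precisely the local-versus-global, single-cycle-commitment-versus-reactive-switching flaw you identify. The only remaining work in your proposal is to exhibit and verify the explicit graph, which the paper does by finite case analysis exactly along the lines you describe.
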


We disprove this by counterexample:

\begin{thm}
There exists an ograph $D$ with $\cop(D) = 2$, exactly one source, and every directed cycle being cop-dominated.
\end{thm}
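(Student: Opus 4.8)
The plan is to refute Conjecture~\ref{copwinconj} by exhibiting a single explicit ograph $D$ that satisfies all three hypotheses on the right-hand side of the claimed equivalence, yet has $\cop(D)=2$. I would present $D$ concretely via its vertex and arc sets together with a figure. The design principle is to take a strongly connected ``core'' built from two (or more) directed cycles that overlap or are joined by connecting arcs, and then attach a single extra vertex of in-degree $0$ with arcs into the core to serve as the unique source. The core is engineered so that each individual cycle carries enough chord/shortcut arcs that a lone cop can overtake a robber who commits to that cycle, while the cycles are glued together so that the robber can repeatedly flee from one cycle into another precisely when the cop begins to close in.

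First I would dispatch the structural hypotheses. By construction the attached vertex is the only vertex of in-degree $0$; since the core is strongly connected every core vertex has positive in-degree, so $D$ has \emph{exactly one} source, and the connecting arcs guarantee every vertex is reachable from it. Note that $D$ must contain directed cycles, for otherwise Proposition~\ref{onesource} would force $\cop(D)=1$; hence all of the interesting content lives in the cyclic structure of the core, and the robber can never reach the source (it has no in-arcs), so the dynamics take place entirely within the core.

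Next I would verify that every directed cycle of $D$ is cop-dominated. For each directed cycle $C$ in the core I would exhibit a single-cop strategy capturing any robber who enters $C$ and thereafter only traverses $C$: the cop uses the chord arcs to strictly decrease the cyclic distance to the robber round by round until the gap closes, which works because a robber confined to $C$ moves predictably along it. As the core is small and contains only finitely many directed cycles, this reduces to a short case check over those cycles.

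The crux, and the main obstacle, is proving $\cop(D)\ge 2$ despite every cycle being cop-dominated. Here I would supply an explicit robber strategy against a single cop and show it evades capture forever. The essential point, which is exactly what reconciles $D$ with the cop-domination of every cycle, is that the robber's escape is \emph{not} achieved by looping on any one cycle; instead the robber switches cycles at the moment the cop commits to a pursuit. I would formalize this with an invariant on the relative positions of the two agents, for instance: after each of the robber's moves the robber sits on a vertex the cop cannot reach on her next turn, and from every such safe configuration a safe move again exists. Proving that this invariant is preserved under all cop responses is where the careful work lies; I expect it to reduce to a finite but nontrivial case analysis organized by where the cop stands relative to the two cycles and their junction. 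Finally, $\cop(D)\le 2$ follows by assigning one cop to each cycle (or by a direct two-cop capture argument on the small graph), yielding $\cop(D)=2$ and completing the refutation.
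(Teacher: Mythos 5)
Your proposal correctly identifies the shape of the argument --- an explicit ograph with one source feeding a strongly connected core, each directed cycle individually cop-dominated, and a robber who evades a single cop by switching cycles at the moment the cop commits --- and this is indeed the strategy the paper follows. But as written it is a plan, not a proof. The theorem is an existence statement, and you never exhibit the graph: every substantive step is deferred (``I would present $D$ concretely,'' ``this reduces to a short case check,'' ``I expect it to reduce to a finite but nontrivial case analysis''). The entire difficulty of the theorem lies in the construction itself. One must arrange the arcs so that (i) for each individual cycle there is a cop trajectory that overtakes any robber committed to that cycle, while (ii) no single cop can threaten all the cycles at once, so the robber always has a cycle to flee to. These two requirements pull against each other --- adding chords and shortcuts to make a cycle cop-dominated tends to make the whole core cop-win --- and whether they can be reconciled is precisely what an explicit example must settle. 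Without the graph in hand, neither the cop-domination verification nor the evasion invariant can even be stated, let alone checked, so the proof has a hole exactly where the theorem's content lives.

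For comparison, the paper's counterexample has twelve vertices: a source $a$ whose out-neighbourhood covers an outer directed structure on $\{b,c,d,e,f,g\}$ (containing the cycles $bgefdcb$, $bgc$, and $def$), which in turn sends arcs into two small directed triangles $hji$ and $jkl$ sharing the vertex $j$. Each triangle is cop-dominated because the cop can threaten it from particular vertices of the outer cycle and force the robber out; but a lone cop who stays on the outer cycle can always be sidestepped by a robber moving within $\{h,i,j,k,l\}$, and a cop who descends into $\{h,i,j,k,l\}$ can never capture anyone there. Your ``switch cycles when the cop commits'' intuition is exactly what this graph realizes; to complete your argument you would need to produce such a graph explicitly and then carry out the finite case analyses you describe for cop-domination and for the robber's evasion.
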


\begin{proof}

Consider the graph in Figure \ref{counterexample}, and suppose that it is cop-win.  Since $a$ is its sole source, it is the unique feasible starting position for the cop.

\begin{figure}[h]
\begin{center}
\scalebox{0.68}{
\begin{tikzpicture}
\tikzset{vertex/.style = {shape=circle,draw,minimum size=1.2em}}
\tikzset{edge/.style = {->,> = latex'}}
\node[vertex] (a) at  (0,0) {$a$};

\node[vertex] (b) at  (2,3) {$b$};
\node[vertex] (g) at  (2,-3) {$g$}; 
\node[vertex] (c) at (7,5) {$c$};
\node[vertex] (e) at (7,-5) {$e$};
\node[vertex] (d) at (12,3) {$d$};
\node[vertex] (f) at (12,-3) {$f$};

\node[vertex] (h) at (3,0) {$h$};
\node[vertex] (i) at (5,0) {$i$};
\node[vertex] (j) at (7,0) {$j$};
\node[vertex] (k) at (9,0) {$k$};
\node[vertex] (l) at (11,0) {$l$};


\draw[edge] (a) to (b);
\draw[edge] (a) to (g);
\draw[edge] (a)  to[bend left=90] (c);
\draw[edge] (a)  to[bend left=90] (d);
\draw[edge] (a)  to[bend left=270] (e);
\draw[edge] (a)  to[bend left=270] (f);

\draw[edge] (h)  to[bend left] (j);
\draw[edge] (l)  to[bend left] (j);

\draw[edge] (g)  to[bend left=25] (c);
\draw[edge] (d)  to[bend left=25] (e);

\draw[edge] (b) to (g);
\draw[edge] (g) to (e);
\draw[edge] (e) to (f);
\draw[edge] (f) to (d);
\draw[edge] (d) to (c);
\draw[edge] (c) to (b);

\draw[edge] (g) to (h);
\draw[edge] (b) to (i);
\draw[edge] (e) to (i);
\draw[edge] (e) to (j);
\draw[edge] (f) to (k);
\draw[edge] (c) to (j);
\draw[edge] (c) to (k);
\draw[edge] (d) to (l);

\draw[edge] (j) to (i);
\draw[edge] (i) to (h);

\draw[edge] (j) to (k);
\draw[edge] (k) to (l);



\end{tikzpicture}
}
\end{center}
\caption{A counterexample to Conjecture \ref{copwinconj}}\label{counterexample}
\end{figure}

The only oriented cycles in the graph are as follows:
\begin{enumerate}
    \item $\{b,g,e,f,d,c,b\}$, $\{b,g,c\}$, $\{d,e,f\}$
    \item $\{h,j,i\}$
    \item $\{j,k,l\}$
\end{enumerate}
We will first show that each of these cycle is cop dominated.  Recall that a cycle is cop-dominated if the robber cannot avoid capture by entering that cycle and moving around it for the rest of the game.  We prove this for each cycle as follows:
\begin{enumerate}
    \item If the robber start from any vertex in $\{b,g,e,f,d,c,b\}$, it will be captured after one move since the cop begins at $a$.
    \item For the cycle $\{h,j,i\}$, we consider three cases:
    \begin{enumerate}
        \item If the robber starts at $h$, the cop will move to $g$, in the next move robber has to move to $j$, and so the cop moves to $e$. Now the robber has to leave the cycle $\{h,j,i\}$, for if the robber moves to $i$, it will get captured in the next step. Hence, the robber has been forced out of the cycle $\{h,j,i\}$.
        \item This case can be reduced to case 2(a) as follows: if the robber starts at $i$, the cop will move to $b$, in the next move robber has to move to $h$, and so the cop moves to $g$.
        \item This case can also be reduced to case 2(a) as follows: if the robber starts at $j$, the cop will move to $c$, in which case the robber has to move to $i$, and the cop moves to $b$. Now the robber and cop are in the same situation as mentioned in above case.
    \end{enumerate} 
    \item For the cycle $\{j,k,l\}$, we use an argument symmetric to that in Case 2.
\end{enumerate}
We now show that the cop can force the robber to move to either the cycle $\{h,j,i\}$ or the cycle $\{j,k,l\}$.  If the cop wants the robber to move to the cycle $\{h,j,i\}$, she moves in the cycle $\{b,g,c\}$. If she wants to force the robber to the cycle $\{j,k,l\}$ from the cycle $\{h,j,i\}$, she will move along the path $\{c,b,g,e\}$ (it is easy to check that, if the robber wishes to stay in the cycle $\{h,j,i\}$, then when the cop arrive at $e$ the robber will be on $j$).

Finally, we consider the cop number.  It is not hard to find a strategy so that $2$ cops can win, so must only show that the ograph is not cop-win.  First note that a single cop positioned on a vertex in $\{h,i,j,k,l\}$ cannot capture the robber, and thus capture must happen from $\{b,g,e,f,d,c\}$.  However, if the cop were to stay on the cycle $\{b,g,e,f,d,c\}$, then the robber is able move out of the cops' position's neighbourhood while staying in $\{h,i,j,k,l\}$.
\end{proof}

\section{Planar oriented graphs}\label{planar}

One of the first fundamental results in the study of the cop number parameter is Aigner and Fromme's proof that $\cop(G) \leq 3$ for any planar graph $G$ \cite{AF84}.  It has been recently shown that this bound does not hold for strong orientations of planar graphs.

\begin{thm}\cite{LO17}
There exists a strongly connected planar ograph $D$ for which $\cop(D) \geq 4$.
\end{thm}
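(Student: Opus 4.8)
The plan is to prove the statement by an explicit construction together with a robber evasion strategy, rather than by any degree-counting argument. It is worth first observing why the tools already developed cannot suffice: Lemma~\ref{dirlowbound} gives $\cop(D)\ge\delta^+(G)$ only when the undirected girth is at least $5$, but a planar graph of girth at least $5$ satisfies $|E|\le\frac{5}{3}(|V|-2)$ by Euler's formula, which is incompatible with $\delta^+\ge 4$ (this would force $|E|\ge 4|V|$). Hence no short forbidden-subgraph or minimum-out-degree argument can produce a planar example with cop number $4$, and the lower bound must instead come from a carefully engineered robber strategy that exploits the asymmetry introduced by the orientation.

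The guiding idea for the construction is to defeat the mechanism behind Aigner and Fromme's bound $\cop(G)\le 3$ \cite{AF84}. Their proof repeatedly has a cop \emph{guard} an isometric path $P$: once a cop occupies the projection of the robber onto $P$, she can shadow that projection forever because both it and the robber move by at most one vertex per round. In an oriented graph this shadowing breaks down, since a cop sitting on a directed path can advance only in the direction of its arcs and cannot track a projection that moves against the orientation. First I would build a planar strongly connected ograph $D$ in which every natural guarding path is a directed path, so that any cop who commits to guarding one can be forced by the robber to lose control of it. A convenient realization is a planar drawing consisting of several nested, cyclically oriented directed cycles joined by radial arcs whose orientations alternate between inward and outward; this keeps the drawing planar, is readily checked to be strongly connected (from any vertex one reaches both the innermost and outermost cycle and returns), and supplies the robber with many one-way escape routes between adjacent cycles.

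The heart of the proof is then the robber strategy against three cops. I would maintain an invariant asserting that, at the end of each of the robber's turns, he occupies a vertex of a directed cycle all of whose relevant out-neighbours, except for those a cop can reach on her next move, remain available; since three cops can threaten at most three such out-neighbours in a single round, the robber survives provided the local out-degree along his cycle (counting both cyclic and radial arcs) is at least four in the safe region. The main obstacle will be establishing this invariant rigorously: one must rule out coordinated configurations in which the three cops simultaneously occupy the robber's cycle and poise themselves on the adjacent cycles to seal off the radial arcs, and one must verify that planarity does not depress the available out-degrees below the threshold needed for the counting argument. Carrying out this case analysis --- reconciling the cyclic and radial arcs so that at least one safe move always survives three cops' threats --- is the delicate step, and it is precisely where the failure of path-guarding in the oriented setting is converted into a provable escape for the robber.
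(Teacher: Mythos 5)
There is no proof to compare against here: the paper states this result only as a citation to \cite{LO17} and reproduces no argument, so your proposal must stand entirely on its own. As written, it does not. What you have is a research plan whose decisive step you explicitly defer (``the main obstacle will be establishing this invariant rigorously\ldots is the delicate step''): the construction is never pinned down (number and lengths of the nested cycles, which radial arcs exist, which point inward versus outward), strong connectivity is asserted rather than checked against a concrete arc set, and the robber strategy is described only as an invariant you hope to maintain. A referee cannot extract a planar ograph with $\cop(D)\ge 4$ from this.

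More seriously, the counting mechanism at the heart of your plan is undermined by the very obstructions you identify in your opening paragraph. First, a planar oriented graph has $|E|\le 3|V|-6$, so its average out-degree is below $3$; a ``safe region'' in which every vertex has out-degree at least $4$ must therefore be a strict minority of the graph, and you give no reason the three cops cannot simply herd the robber out of it. Second, the claim that ``three cops can threaten at most three such out-neighbours in a single round'' presupposes that no single cop position dominates two out-neighbours of the robber; a cop at $c$ with arcs to two out-neighbours $s_1,s_2$ of the robber's vertex $r$ closes a $4$-cycle $r s_1 c s_2$ in the underlying graph, and such configurations are unavoidable in any planar region dense enough to supply out-degree $4$ --- this is exactly the undirected-girth-$5$ requirement that you correctly observe is incompatible with planarity at that density. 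So the invariant is not merely unverified; in the regime your construction must live in, it is false. The actual argument in \cite{LO17} is an explicit construction with a carefully verified robber strategy, and any self-contained proof here would need that level of concreteness: a fully specified arc set and a robber strategy whose safety is argued against \emph{all} placements of three cops, not a per-round out-neighbour count.
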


On the other hand, it is known that planar ographs satisfy the Meyniel bound:

\begin{thm}\cite{LO17}
 If $D$ is a strongly connected orientation of a planar graph $G$, then $\cop(D) \in O(\sqrt{|V(D)|})$.
\end{thm}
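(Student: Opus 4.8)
The plan is to reduce the directed problem to the classical planar separator theorem of Lipton and Tarjan, exploiting the fact that the \emph{underlying} graph $G$ is planar while using strong connectivity only to ferry cops into position. First I would recall that every planar graph on $n$ vertices admits a vertex separator $S$ with $|S| = O(\sqrt{n})$ whose removal leaves no component with more than $\tfrac{2}{3}n$ vertices. The key observation is that a single cop parked on a vertex $w$ and left there forever captures the robber the instant he steps onto $w$, \emph{regardless of how the arcs incident to $w$ are oriented}. Consequently, stationing one static cop on each vertex of $S$ — a cost of $O(\sqrt{n})$ cops — permanently confines the robber, who must begin off $S$, to whichever side of the separator he initially occupies: in the underlying graph every path between the two sides meets $S$, hence so does every directed path in $D$.

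Next I would set up a recursion on the size of the region to which the robber is confined. Having trapped him inside a region $A$ with $|A| \le \tfrac{2}{3}n$, I would deploy a fresh batch of $O(\sqrt{|A|})$ cops onto a separator $S'$ of the (still planar) graph $G[A]$. This is where strong connectivity of $D$ enters: each new cop can be routed to its assigned vertex of $S'$ by following a directed path in $D$, which exists precisely because $D$ is strongly connected, and since the robber remains trapped in $A$ by the cops guarding $S$, he cannot interfere with this maneuvering. Crucially, navigation happens in the ambient digraph $D$, not in $G[A]$, so the fact that induced subgraphs need not inherit strong connectivity causes no difficulty. Simultaneous arrival is not required either: I simply wait the finitely many rounds until every vertex of $S'$ has been reached and settled upon, after which the robber is confined to a subregion of size at most $\tfrac{2}{3}|A| \le (\tfrac{2}{3})^2 n$, and iterate.

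To count cops, let $f(n)$ denote the number used by this strategy. The top-level separator costs at most $c\sqrt{n}$ cops and shrinks the live region by a factor $\tfrac{2}{3}$, giving $f(n) \le c\sqrt{n} + f(\tfrac{2}{3}n)$. Unrolling this yields
\[
f(n) \;\le\; c\sqrt{n}\,\sum_{i \ge 0}\Bigl(\tfrac{2}{3}\Bigr)^{i/2} \;=\; \frac{c}{1-\sqrt{2/3}}\,\sqrt{n} \;=\; O(\sqrt{n}),
\]
since the geometric ratio $\sqrt{2/3} < 1$; the recursion bottoms out when the region has constant size, where a bounded number of cops completes the capture.

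The main obstacle is the directed ``establishing a separator'' step, and I expect the careful writing to concentrate there. Three points must be argued cleanly: that static cops confine the robber despite the orientation (handled by the orientation-independence of static guarding), that cops can always reach their assigned separator vertices in finite time even though $G[A]$ may fail to be strongly connected (handled by routing inside the ambient strongly connected $D$ while the previously placed cops keep the robber trapped), and that one may dispense with simultaneity by waiting out the finite transit phase. Everything else is the Lipton--Tarjan theorem and a routine geometric sum.
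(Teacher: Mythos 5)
The paper does not actually prove this theorem---it is quoted from \cite{LO17} without proof---so there is no in-paper argument to compare against. Your separator recursion is correct as written: a static cop blocks a vertex irrespective of arc orientations, strong connectivity is invoked exactly where it must be (to route the adaptively chosen later batches of cops, which cannot be pre-positioned because the component containing the robber is unknown until he commits, and pre-positioning on every component's separator would ruin the count), and the geometric sum $\sum_{i\ge 0} c\sqrt{(2/3)^i\, n} = O(\sqrt{n})$ closes the bound. This is the standard Lipton--Tarjan recursion and, as far as I can tell, essentially the route taken in the cited source, so the argument stands as a valid self-contained proof of the stated theorem.
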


The following question remains open:

\begin{ques}\label{planarograph}
Does there exist a constant $k$ such that $\cop(D) \leq k$ for every strongly connected planar ograph $D$?
\end{ques}

We are able to affirm Question \ref{planarograph} in the case of outerplanar ographs, showing that $\cop(D)=2$ for every strongly connected outerplanar ograph $D$ (recall that any digraph with no source must necessarily have $\cop(D) > 1$).

\begin{lem}\label{Lemmaouterplanar1}
An outerplanar ograph is Hamiltonian if and only if the exterior face is bounded by a Hamiltonian cycle.
\end{lem}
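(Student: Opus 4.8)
The plan is to notice that the backward (\emph{if}) direction is immediate and to concentrate all the work on the forward (\emph{only if}) direction, whose real content is a statement about the \emph{underlying} undirected graph: in a $2$-connected outerplanar graph the outer boundary is the \emph{unique} Hamiltonian cycle. For the \emph{if} direction, if the exterior face is bounded by a Hamiltonian cycle, then by definition the arcs along that boundary form a consistently oriented directed cycle through every vertex, which is exactly a directed Hamiltonian cycle of $D$; hence $D$ is Hamiltonian.

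For the \emph{only if} direction, I would suppose $D$ has a directed Hamiltonian cycle $H$. Since a graph admitting a Hamiltonian cycle is $2$-connected, the underlying graph $G$ is $2$-connected, so in the fixed outerplanar embedding the exterior face is bounded by a cycle $C$ passing through all of $V(G)$. The goal is then to show that the underlying undirected cycle of $H$ is exactly $C$. Write $C = v_0 v_1 \cdots v_{n-1} v_0$ in the cyclic order around the exterior face; every edge of $G$ not on $C$ is a chord drawn inside the disk bounded by $C$. Suppose, toward a contradiction, that $H$ uses a chord, and choose a chord $e = v_i v_j \in H$ minimizing the number of vertices on one of the two arcs of $C$ between its endpoints; call the vertices on that arc $P$ and those on the complementary arc $Q$, both nonempty since $e$ is a chord. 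Because the embedding is outerplanar, no edge crosses $e$, so every edge of $G$ incident to a vertex of $P$ has its other end in $P \cup \{v_i, v_j\}$; in particular no $P$-vertex is adjacent (in $G$, hence in $H$) to any $Q$-vertex.

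Now I would delete $e$ from $H$ to obtain a Hamiltonian path from $v_i$ to $v_j$, say $v_i = u_0, u_1, \ldots, u_{n-1} = v_j$. Any maximal run of consecutive $P$-vertices along this path must be flanked by vertices of $\{v_i, v_j\}$ (the only non-$P$ vertices a $P$-vertex can be adjacent to); but $v_i$ and $v_j$ occur only at the two ends $u_0$ and $u_{n-1}$, so the only possibility is that all of $u_1, \ldots, u_{n-2}$ lie in $P$, forcing $Q = \emptyset$, a contradiction. Hence $H$ uses no chord, so its underlying cycle is $C$. Since $H$ is a directed cycle whose edges are precisely the arcs of $C$ taken with their orientation in $D$, the boundary $C$ is itself a directed Hamiltonian cycle, i.e.\ the exterior face is bounded by a Hamiltonian cycle, as required.

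I expect the main obstacle to be the trapped-arc argument establishing that a Hamiltonian cycle of a $2$-connected outerplanar graph cannot use a chord; the remainder is bookkeeping. Care is needed to make the no-crossing claim rigorous from the \emph{fixed} outerplanar embedding (rather than invoking an abstract uniqueness result that would be circular), and to dispose of the degenerate case where $G$ fails to be $2$-connected, in which case neither side of the biconditional holds and the statement is consistent trivially.
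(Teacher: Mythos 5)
Your proof is correct, but it takes a more self-contained route than the paper. The paper disposes of the lemma in one line by citing a result of Syslo~\cite{S79} that a $2$-connected outerplanar graph has a \emph{unique} Hamiltonian cycle (which must therefore be the outer face boundary), and then observing that the orientation statement follows. You instead prove that uniqueness fact from scratch: after noting that Hamiltonicity forces $2$-connectedness, so the outer face is bounded by a Hamiltonian cycle $C$, you show any Hamiltonian cycle $H$ avoids chords via the separation argument (a chord $v_iv_j$ splits the remaining vertices into sets $P$ and $Q$ with no $P$--$Q$ edges, and the Hamiltonian path $H-e$ then forces $Q=\emptyset$). This argument is sound --- the ``maximal run'' step correctly pins $u_1,\dots,u_{n-2}$ inside $P$ --- though the minimality in your choice of chord is never actually used and could be dropped. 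What your approach buys is independence from the cited literature and an explicit, elementary reason why the chord cannot occur; what the paper's approach buys is brevity. Both correctly handle the translation from the undirected uniqueness statement to the directed claim (the arcs of $H$ are exactly the arcs bounding the exterior face, with their given orientations), and your closing remark that the non-$2$-connected case is vacuous is accurate.
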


\begin{proof}
This follows immediately from a result due to \cite{S79} which states that any $2$-connected outerplanar graph has a unique Hamiltonian cycle.
\end{proof}

\begin{lem}\label{Lemmaouterplanar2}
An orientation $D$ of an outerplanar graph $G$ is strongly connected if and only if there exists a collection of Hamiltonian induced subdigraphs $\mathcal{H} = \{H_1,\ldots,H_k\}$ such that 
	\begin{compactitem}
    \item for all $1 \leq i < j \leq k$, $H_i \cap H_j$ is either an arc of $D$, a node of $D$, or empty;
    \item every arc on the outer face of $D$ is in a Hamiltonian cycle of some $H_i$.
    \item the graph $G_{\mathcal{H}}$ having $V(G_{\mathcal{H}}) = \mathcal{H}$ and $E(G_{\mathcal{H}}) = \{(H_i,H_j) : V(H_i) \cap V(H_j) \neq \emptyset\} \subseteq \mathcal{H}^2$ is a tree.
    \end{compactitem}
\end{lem}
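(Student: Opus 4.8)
The plan is to prove the two directions separately, with the reverse (sufficiency) direction being routine and the forward (necessity) direction carrying the real work.

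For the reverse direction, assume the collection $\mathcal{H}$ exists. Each $H_i$ possesses a directed Hamiltonian cycle and is therefore strongly connected. Because every vertex of $G$ lies on the outer face, it is incident to some arc on the outer face, and by the second bullet that arc lies in a Hamiltonian cycle of some $H_i$; hence $\bigcup_i H_i$ spans $V(D)$. I would then show $\bigcup_i H_i$ is strongly connected by induction on $k$ along the tree $G_{\mathcal{H}}$: removing a leaf $H_k$ leaves a subcollection whose intersection graph is exactly $G_{\mathcal{H}}$ minus that leaf, hence still a tree, so inductively its union is strongly connected; and $H_k$ shares at least one vertex $v$ with its neighbour, so any vertex of $H_k$ reaches $v$ and thence everything, and conversely. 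Since $D$ contains this spanning strongly connected subdigraph, $D$ is strongly connected.

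For the forward direction, suppose $D$ is strongly connected. By Robbins' theorem $G$ is bridgeless, so its blocks are $2$-connected. I would first reduce to the $2$-connected case: the restriction of $D$ to each block is strongly connected (all traffic between blocks passes through cut vertices, through which each block must be internally strongly connected), and collections for the blocks can be amalgamated along cut vertices; the only delicate point is keeping $G_{\mathcal{H}}$ acyclic. For a $2$-connected outerplanar $G$, Lemma \ref{Lemmaouterplanar1} gives the unique outer Hamiltonian cycle $C$, and the chords carry a tree structure (the weak dual), each chord splitting $G$ into two smaller $2$-connected outerplanar graphs sharing that chord. The core construction is then an induction on the number of chords. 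Call a chord $e=\{a,b\}$ \emph{cuttable} if both sides into which $e$ splits $G$ are strongly connected under the induced orientation. If a cuttable chord exists, I would cut there, apply induction to both strictly smaller sides, and glue the two collections along the shared arc $e$; condition (1) holds because adjacent regions meet exactly in $e$, and $G_{\mathcal{H}}$ remains a tree as long as the cut chords are pairwise non-adjacent. If no chord is cuttable (in particular if $G=C$ has no chords), I would show $C$ is itself a directed cycle, so $D$ is Hamiltonian and $\mathcal{H}=\{D\}$ suffices.

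The main obstacle is this irreducible base case together with the tree constraint, which is severe: if three members of $\mathcal{H}$ shared a common vertex they would form a triangle in $G_{\mathcal{H}}$, so every vertex must lie in at most two members, i.e. the cut chords must form a \emph{matching}. I therefore expect the heart of the argument to be twofold. First, that cuttable chords can always be selected pairwise non-adjacent; here I would verify that two cuttable chords incident to a common vertex are incompatible with strong connectivity of $D$ (the wedge region between them is starved of in- or out-degree at the shared vertex). Second, that when no chord is cuttable, $C$ must be coherently oriented; I would argue by contradiction, taking a local sink $s$ on $C$ (both incident outer arcs pointing into $s$), noting that strong connectivity forces an outgoing chord at $s$, and showing that this chord, or one adjacent to it, is cuttable, contradicting irreducibility. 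Discharging these two claims is where the genuine case analysis lies; with them in hand the two directions assemble into the stated equivalence.
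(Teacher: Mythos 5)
Your high-level architecture (routine sufficiency, reduction to $2$-connected blocks, then an induction on a $2$-connected outerplanar piece) matches the paper's, but your core induction is genuinely different and, as proposed, does not go through. The paper inducts on $|V(D)|$ by peeling off an innermost ear: it picks a chord $v_iv_j$ all of whose enclosed outer-cycle vertices have degree $2$, observes that strong connectivity forces each degree-$2$ vertex to have in-degree and out-degree $1$, so the enclosed path is coherently directed, and then splits on the orientation of the chord: if the chord closes the ear into a directed cycle, that cycle becomes a new member glued to the rest along the chord arc; if the chord points the same way as the path, the ear is absorbed into the existing member whose Hamiltonian cycle uses that arc. This sidesteps both of the claims your plan hinges on, and its base case (a strongly connected orientation of a cycle is a directed cycle) is immediate.

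Of your two outstanding claims, the first is false. Take the fan with outer cycle $v_1v_2v_3v_4v_5$ and chords $v_1v_3$, $v_1v_4$, oriented so that each of the triangles $v_1v_2v_3$, $v_1v_3v_4$, $v_1v_4v_5$ is a directed cycle (arcs $v_1\to v_2\to v_3\to v_1$, $v_1\to v_4\to v_3$, $v_4\to v_5\to v_1$). This ograph is strongly connected, both chords are cuttable in your sense, and they share the vertex $v_1$; the wedge between them is the directed triangle on $\{v_1,v_3,v_4\}$, which is not starved of in- or out-degree at $v_1$. Worse, the only Hamiltonian induced subdigraphs here are those three triangles, covering the outer arcs forces all three into $\mathcal{H}$, and they pairwise meet at $v_1$ --- so the cut chords provably cannot be chosen as a matching. (This example also shows that the literal tree condition on $G_{\mathcal{H}}$, with edges given by nonempty vertex intersection, is more delicate than either you or the paper acknowledges: the paper's assertion that each new ear ``will be a leaf'' quietly fails at a fan apex. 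Your instinct that the tree constraint is the danger point is sound; your proposed resolution is not.) Your second claim --- that if no chord is cuttable then the outer cycle is coherently oriented --- is plausible but is precisely the content you have not supplied; the local-sink sketch is not yet a proof. I would replace the cuttable-chord induction with the innermost-ear induction, where the only input from strong connectivity is the forced orientation at degree-$2$ vertices.
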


\begin{proof}
The backward implication follows from the fact that that $G_{\mathcal{H}}$ is a tree and each $H_i$ is Hamiltonian.

Assume that $G$ is $2$-connected, noting that one may apply the argument below to any $2$-connected block of $G$.  We proceed by induction on $|V(D)|$, noting that the statement is trivially true when $D$ is an oriented cycle.  Consider an embedding of $G$ in the plane such that the outer face is a cycle $v_1v_2\cdots v_n$.  Let $e = v_iv_j$ be a chord such that every vertex in $\{v_{i+1},\ldots,v_{j-1}\}$ has degree $2$ in $G$.  In $D$, consecutive arcs in the path $v_iv_{i+1}\cdots v_j$ along $C$ must be oriented head to tail since $D$ is strongly connected.  Without loss of generality, assume that these arcs are $(v_i,v_{i+1}),(v_{i+1},v_{i+2}),\ldots,(v_{j-1}v_j)$.  By induction, $D' = D \setminus \{v_{i+1},\ldots,v_{j-1}\}$ contains a decomposition $\mathcal{H}'$ as described in the statement of the theorem.  If $(v_j,v_i) \in A(D)$, then we add the Hamiltonian cycle induced by $\{v_{i},\ldots,v_{j}\}$ to the collection, which will be a leaf in $G_\mathcal{H}$.  Otherwise, suppose $(v_i,v_j) \in A(D)$.  This arc lies on the outer face of $D'$, and thus lies in a Hamiltonian cycle of some $H_i$.  We then replace $H_i$ with $H_i \cup \{v_{i},\ldots,v_{j}\}$ in $\mathcal{H}'$ to obtain $\mathcal{H}$.  Since $G_{\mathcal{H}}$ is isomorphic to $G_{\mathcal{H'}}$, the proof is complete.
\end{proof}


\begin{lem}\label{Lemmaouterplanar3}
If $D$ is a Hamiltonian outerplanar ograph, then $\cop(D) = 2$.  
\end{lem}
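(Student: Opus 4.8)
The plan is to first pin down the global structure and dispatch the easy inequality, then build an explicit two-cop strategy on the outer cycle. Since $D$ is Hamiltonian it is $2$-connected, so by the uniqueness result behind Lemma \ref{Lemmaouterplanar1} the outer face of $D$ is bounded by the unique Hamiltonian cycle of the underlying graph; as $D$ carries a directed Hamiltonian cycle, that directed cycle must be exactly this boundary. Hence the outer boundary is a consistently oriented directed cycle $C = v_1 \to v_2 \to \cdots \to v_n \to v_1$, every vertex lies on $C$, and every remaining arc is a chord of $C$. In particular $D$ is strongly connected, so it has no source and $\cop(D) \ge 2$; it remains only to show that two cops suffice.

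For the upper bound I would have the two cops patrol $C$ in the forward (orientation) direction and maintain the invariant that the robber is trapped in a contiguous forward-arc $(a,b)$ of $C$ whose endpoints are the current cop positions, with the goal of strictly shrinking this arc. The generic ``squeeze'' move is the one that already proves $\cop(\text{directed }n\text{-cycle}) = 2$: the front cop at $b$ holds position while the rear cop at $a$ advances one step; since the robber may only move forward along $C$ (or stay), it cannot pass $b$ without being captured and cannot retreat past $a$, so the arc length $b - a$ strictly decreases. Iterating, the robber is caught in finitely many rounds in the chord-free case.

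The main obstacle --- and the only place outerplanarity is genuinely used --- is that a chord may give the robber an escape out of the confining arc: an arc $x \to y$ with $x \in (a,b)$ and $y$ outside $(a,b)$ lets the robber jump either forward past $b$ or backward past $a$ in a single move. The key point is that, since $D$ is outerplanar with all vertices on $C$, its chords form a \emph{non-crossing} (laminar) family, so the set of escape chords available from the current arc is nested rather than arbitrary. I would use this to show that at each stage the cops have a repositioning that simultaneously blocks every escape chord and does not lengthen the confining arc: informally, one cop is stationed on the head of the relevant extreme escape chord so that any attempt to use it walks the robber onto a cop, after which the squeeze resumes on the (no larger) arc. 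Making this precise requires a short case analysis on whether the extreme escape chord points forward or backward relative to $C$, together with the observation that a chord, once ``used up,'' cannot be used again until the robber travels all the way around $C$ --- during which the cops make progress.

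Finally, I would package the argument as a termination statement: define $\Phi$ to be the number of vertices in the confining arc (or, if needed to absorb escape steps, the pair consisting of the number of chords crossing the current cut together with the arc length, ordered lexicographically), and argue from the above that the cops can play so that $\Phi$ never increases and strictly decreases within a bounded number of rounds, unless the robber is already captured. As $\Phi$ is a well-founded quantity, the robber is captured after finitely many rounds, giving $\cop(D) \le 2$ and hence $\cop(D) = 2$. The delicate part to get right is the claimed repositioning lemma in the previous paragraph; everything else is bookkeeping on the linear order induced by $C$.
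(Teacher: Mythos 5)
Your structural setup and lower bound are fine and coincide with the paper's: the directed Hamiltonian cycle must bound the outer face, every other arc is a chord, the chords are pairwise non-crossing, and strong connectivity gives $\cop(D)\geq 2$. The genuine gap is the ``repositioning lemma'' that you explicitly defer: it is not bookkeeping, it is the whole theorem, and in the form you state it, it fails. First, nestedness works against you: if escape chords $x_1\to y_1$ and $x_2\to y_2$ leave the confining arc $(a,b)$ with $x_1$ closer to $a$, then non-crossing forces $y_2$ to lie between $b$ and $y_1$ (inclusive), so the heads of distinct escape chords are in general \emph{distinct} vertices, and one cop on the head of the extreme chord blocks only arcs into that single vertex, not ``every escape chord.'' Second, re-confinement fails because outerplanarity forbids chords from crossing \emph{chords}, not from re-entering $(a,b)$. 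Concretely, take the outer directed cycle together with chords $x_1\to y_1$, $x_2\to y_2$ and $w\to z$ whose endpoints occur in the cyclic order $a,x_1,z,x_2,b,y_2,w,y_1$; all three chords are pairwise non-crossing, the extreme escape chord from $(a,b)$ is $x_1\to y_1$, and yet with cops parked on $b$ and $y_1$ the robber can traverse the directed closed walk $z\to\cdots\to x_2\to y_2\to\cdots\to w\to z$ forever, never visiting $a$, $b$, $x_1$ or $y_1$. So neither version of your potential $\Phi$ is forced to decrease, and your auxiliary claim that a used chord cannot be reused before the robber circumnavigates $C$ is false here (he reuses two chords on every lap of a short closed walk while never passing $b$). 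There is also a purely directed difficulty you gloss over: a cop who leaves her post to ``reposition'' can only get behind the robber again by travelling all the way around $C$, and during that trip the single remaining cop cannot maintain any confinement.

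What makes the paper's proof work is a stronger invariant that your arc $(a,b)$ lacks: the robber territory is always bounded by an \emph{actual chord} of the graph, with the stationary cop on one of its ends. Non-crossing then genuinely implies that every arc leaving the territory ends at one of the two chord endpoints, so there is something a single cop can guard. The second cop sweeps forward along $C$ to the first territory vertex $v_k$ incident with a chord, takes the chord $v_kv_m$ from $v_k$ that reaches farthest ahead, and occupies $v_m$; the choice of $v_k$ (first branching vertex, so the swept stretch is chord-free) and of $v_m$ (farthest reach from $v_k$) is exactly what excludes arcs from the new, strictly smaller territory back into the swept stretch. Repairing your argument essentially means replacing cop-position-bounded arcs by chord-bounded territories and redoing the sweep, i.e., reconstructing the paper's proof.
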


\begin{proof}
Let $V(D) = \{v_1, \ldots, v_n\}$, with labels chosen so that $C = v_1v_2\cdots v_nv_1$ is a directed Hamiltonian cycle of $D$.  Let $G$ denote the underlying undirected graph of $D$.  We prove that the cops have a strategy so that, after each move, a cop occupies the end of a chord of $G$.  Call this cop the "stationary cop".  A chord $v_iv_j$ naturally partitions $V(D) \setminus \{v_iv_j\}$ according to the two directed paths left when deleting $v_i$ and $v_j$ from $C$.  The part of $V(D)$ in which the robber is located will be called the ``robber territory''.  We show that, after a finite number of steps, either the robber is captured or the non-stationary cop will occupy a new chord which shrinks the robber territory.  This cop then becomes the stationary cop, and the argument repeats.  Since the robber territory cannot shrink indefinitely, this method provides a capture strategy for two cops.  Note that, throughout, the subscript of $v_i$ will be taken modulo $n$.

Suppose that the stationary cop occupies an end of the chord $v_iv_j$ and suppose, without loss of generality, that $\{v_{i+1}, \ldots, v_{j-1}\}$ is the robber territory.  Let $k \in \{i+1, \ldots, j-1\}$ be the smallest index (modulo $n$) such that $v_k$ has total degree at least $3$ (note that, if none exists, the non-stationary cop has a clear winning strategy by walking along the induced directed path $v_iv_{i+1}\cdots v_{j-1}v_j$).  Let $v_kv_m$ be the chord of $G$ such that $v_m$ is the closest vertex in $\{v_{k+1}, \ldots, v_{j-1}\}$ to $v_j$.  The stationary cop begins by moving along each vertex of the directed path $v_iv_{i+1}\cdots v_m$ while the stationary cop remains in place.  If the robber has not been captured, then he now is on some vertex of $v_mv_{m+1}\cdots v_{j-1}v_j$.

If $v_k$ is the head of the directed arc $(v_k,v_m)$ and the robber's position is on the path $v_kv_{k+1}\cdots v_{m-1}v_m$, then the non-stationary cop becomes the stationary cop, and the robber's territory has decreased, as desired.  If $v_k$ is the tail of the directed arc $(v_m,v_k)$, then the non-active cop continues walking along $C$ until reaching $v_m$.  Again, if the robber's position is on the path $v_kv_{k+1}\cdots v_{m-1}v_m$, then the non-stationary cop becomes the stationary cop, and the robber's territory has decreased.

In each case above, there is the possibility that the robber's position is on the directed path $v_{m+1}\cdots v_{j-1}v_j$.  If $v_k$ is the head of the directed arc $(v_k,v_m)$, then the non-stationary cop moves from $v_k$ to $v_m$ (and if $v_k$ was the tail, then the non-stationary cop may already be considered to be on $v_m$).  Note that it is now impossible for the robber to return to $\{v_{i+1}, \ldots, v_{m-1}\}$, as no vertex in $\{v_{i+1}, \ldots, v_{k-1}\}$ is incident to a chord of $G$, no vertex in $\{v_{m+1},\ldots,v_{j}$ is incident to $v_k$ by choice of $m$, and no chord may cross $v_kv_m$.  Now the non-stationary cop continues moving along $C$ to the next vertex of total degree at least $3$ and repeats the same strategy as above.  At some point, the non-stationary cop will be able to become the stationary cop and reduce the robber territory, or will exhaust the vertices of total degree at least $3$ along $v_iv_{i+1}\cdots v_{j}$.  In the latter case, all that remains is an induced path along $C$ which terminates at $v_j$, and the robber is easily captured.
\end{proof}

\begin{thm}\label{outerplanar2}
If $D$ is a strongly connected outerplanar ograph, then $\cop(D) = 2$.
\end{thm}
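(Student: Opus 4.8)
The lower bound is immediate: a strongly connected ograph on at least two vertices has minimum in-degree at least one, hence no source, and as noted above any sourceless digraph has cop number at least $2$. So $\cop(D) \geq 2$, and the real work is to produce a winning strategy for two cops. The plan is to lift the single-block strategy of Lemma~\ref{Lemmaouterplanar3} to the tree of Hamiltonian blocks supplied by Lemma~\ref{Lemmaouterplanar2}.

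By Lemma~\ref{Lemmaouterplanar2}, write $D$ as a union of Hamiltonian induced subdigraphs $\mathcal{H} = \{H_1,\dots,H_k\}$ whose intersection graph $G_{\mathcal{H}}$ is a tree, each adjacent pair meeting in a single node or a single arc. I would run a territory-shrinking argument directly analogous to the proof of Lemma~\ref{Lemmaouterplanar3}, but one level up: instead of the robber territory being an arc of a single directed Hamiltonian cycle cut off by a chord, it will be the vertex set of all blocks lying in one component of $G_{\mathcal{H}}$ after deleting a held separator. One cop plays the role of the stationary cop, occupying the node that separates the robber's subtree from the rest of the tree; when the separator is instead a shared arc $(u,v)$, that arc is literally a chord of the ambient block, so it plays exactly the role of the chord $v_iv_j$ in Lemma~\ref{Lemmaouterplanar3}.

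The active cop advances into the robber's territory by walking along the directed Hamiltonian cycles of the successive blocks it passes through --- this is where Lemma~\ref{Lemmaouterplanar1} is used, since it guarantees that each block's outer boundary is a directed Hamiltonian cycle the cop can follow --- until it reaches and installs a new separator strictly deeper in the tree, whereupon it becomes the new stationary cop and the old one is freed. As in Lemma~\ref{Lemmaouterplanar3}, the crucial point that the robber cannot leak back across the newly installed separator is supplied by outerplanarity: no chord of an outerplanar graph crosses another, so once the active cop secures the deeper chord (or node) the robber is sealed into a strictly smaller union of blocks. Because the number of vertices in the robber's territory strictly decreases at each phase, after finitely many phases the robber is confined to the interior of a single Hamiltonian block, where the two cops finish the capture by Lemma~\ref{Lemmaouterplanar3} outright.

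I expect the main obstacle to be the handoff at separators, and in particular the arc-separator case: a single cop at one endpoint of a shared arc $(u,v)$ does not by itself block passage through the other endpoint, so confinement cannot be argued by brute guarding. The fix I anticipate is exactly the non-crossing argument of Lemma~\ref{Lemmaouterplanar3} --- choosing the next separating chord $v_kv_m$ as the one closest to the far end of the current territory and invoking that no chord may cross $v_kv_m$ --- now applied across block boundaries rather than within one cycle. A secondary technical point is bookkeeping the active cop's traversal when it must thread through several blocks of $\mathcal{H}$ in sequence, keeping subscripts consistent modulo each block's Hamiltonian length. An alternative, cleaner framing that sidesteps some of this is an induction on $k$ in which a leaf block of $G_{\mathcal{H}}$ is peeled off and the robber is either caught inside it by Lemma~\ref{Lemmaouterplanar3} or forced through the attaching separator into the smaller strongly connected digraph $D'$ on the remaining $k-1$ blocks, with the transition handled by a shadow (retraction-to-separator) strategy.
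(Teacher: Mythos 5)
Your proposal is correct and follows essentially the same route as the paper: lower bound from sourcelessness, then a territory-shrinking strategy over the tree $G_{\mathcal{H}}$ of Hamiltonian blocks from Lemma~\ref{Lemmaouterplanar2}, using Lemma~\ref{Lemmaouterplanar3} as the engine within each block and handing off at the shared node or arc until the robber is cornered in a leaf block. The paper resolves the separator handoff you worry about by having both cops simply run the Lemma~\ref{Lemmaouterplanar3} strategy inside the current block while pretending the robber sits at the cut vertex, which is the same idea in slightly cleaner clothing.
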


\begin{proof}
Let $\mathcal{H}$ be the collection of Hamiltonian induced subdigraphs guaranteed by Lemma \ref{Lemmaouterplanar2}, and $G_{\mathcal{H}}$ its associated tree.  At each step, the cops will reside in the same $H_i$, and play a simple modification of the strategy given in the proof of Lemma \ref{Lemmaouterplanar3}.  If the robber is in $H_i$, then the cops play the strategy as stated.  Suppose the robber is in $H_r$, $r \neq i$.  Since $G_{\mathcal{H}}$ is a tree, there is a unique path in $G_{\mathcal{H}}$ from $H_i$ to $H_r$; let $H_j$ be the neighbour of $H_i$ on this path.  By Lemma \ref{Lemmaouterplanar2}, $H_i$ intersects $H_j$ either at a single node $v_i$ or along an arc $(v_i,v_j)$.  In either case, the cops then play the strategy from the proof of Lemma \ref{Lemmaouterplanar3} imagining $v_i$ as the robber's position.  Clearly the cops will either catch the robber in $H_i$ or will occupy $v_i \in H_j$.  In the latter case, the cops have reduced the robber's territory, and can then iterate the strategy again on $H_j$.  Eventually the robber will be captured or the cops will have ``chased'' the robber to $H_l$ where $H_l$ is a leaf of $G_\mathcal{H}$.  In this case, the strategy from the proof of Lemma \ref{Lemmaouterplanar3} will capture the robber.
\end{proof}

\section{Line Digraphs}\label{linedigraphs}

In \cite{DGP14}, cops and robbers played on the edge set of a graph is considered.  In that variation, an agent may move from one edge to any incident edge.  The fewest number of cops needed to capture the robber when playing on the edges of $G$ is denoted $\ecop(G)$, and it is proven that $\lceil\frac{\cop(G)}{2}\rceil \leq \ecop(G) \leq \cop(G)+1$.  Note that $\ecop(G) = \cop(L(G))$ where $L(G)$ denotes the line graph of $G$.

The line digraph of a digraph $D$, denoted $L(D)$ is defined similarly.  The nodes of $L(D)$ are the arcs of $D$, and there is an arc in $L(D)$ between $(a,b)$ and $(c,d)$ if and only if $b=c$.  We consider cops robbers played on the arcs of digraphs, where an agent may move from $(a,b)$ to $(c,d)$ only when $b=c$.  In other words, one plays the vertex version of cops and robbers on $L(D)$.  Somewhat surprisingly, we can show that the cop number of a digraph $D$ is equal to that of its line digraph if $D$ is strongly connected. 

\begin{prop}
If $D$ is strongly connected, then so is $L(D)$.
\end{prop}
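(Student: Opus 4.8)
The plan is to establish strong connectivity of $L(D)$ directly, by producing a directed path between any two of its nodes. Recall that the nodes of $L(D)$ are the arcs of $D$, and that there is an arc from $(a,b)$ to $(c,d)$ in $L(D)$ precisely when $b=c$; consequently a directed walk in $L(D)$ is nothing more than a sequence of arcs of $D$ in which each arc's head coincides with the next arc's tail, that is, a directed walk in $D$ recorded arc-by-arc.

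First I would fix two arbitrary nodes of $L(D)$, namely arcs $(a,b)$ and $(c,d)$ of $D$. The key observation is that to join them inside $L(D)$ I need to connect the \emph{head} $b$ of the source arc to the \emph{tail} $c$ of the target arc by a walk in $D$. Since $D$ is strongly connected, there is a directed path $b=u_0,u_1,\ldots,u_m=c$ in $D$ (with $m=0$ in the degenerate case $b=c$).

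Next I would lift this path to $L(D)$ by considering the sequence of arcs $(a,b),(u_0,u_1),(u_1,u_2),\ldots,(u_{m-1},u_m),(c,d)$. Each consecutive pair shares the vertex required by the definition of $L(D)$: the head of $(a,b)=(a,u_0)$ equals the tail of $(u_0,u_1)$; the head of $(u_{i-1},u_i)$ equals the tail of $(u_i,u_{i+1})$ for each $i$; and the head of $(u_{m-1},u_m)=(u_{m-1},c)$ equals the tail of $(c,d)$. Hence this sequence is a directed walk in $L(D)$ from $(a,b)$ to $(c,d)$, and since these nodes were arbitrary, $L(D)$ is strongly connected.

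There is no substantial obstacle in this argument; the only points requiring care are the bookkeeping of which endpoints must be joined (the head of the source arc to the tail of the target arc, not head-to-head or tail-to-tail) and the handling of the degenerate case $b=c$, in which the single arc $(a,b)\to(c,d)$ already lies in $L(D)$. One should also dispatch the trivial case in which $D$ has no arcs, so that $L(D)$ is empty and vacuously strongly connected.
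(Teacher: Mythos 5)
Your argument is correct: connecting the head of the source arc to the tail of the target arc via a directed path in $D$ and lifting that path arc-by-arc is exactly the standard justification, and your bookkeeping (including the degenerate case $b=c$) is right. The paper states this proposition without proof, treating it as immediate, so your write-up simply supplies the routine argument the authors omitted.
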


\begin{thm}\label{linedigraph}
If $D$ is a strongly connected digraph $D$, then $\ecop(D) = \cop(D)$.
\end{thm}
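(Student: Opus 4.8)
The plan is to prove the two inequalities $\cop(D)\le\cop(L(D))$ and $\cop(L(D))\le\cop(D)$ separately, by transferring winning strategies across the ``head projection'' $\pi\colon L(D)\to D$ defined by $\pi(a,b)=b$. The reason to work with $\pi$ is that it behaves like a fibration for the dynamics of the game: the out-neighbours of the node $(a,b)$ in $L(D)$ are exactly the arcs $(b,d)$ with $d\in N^+(b)$, so an agent's legal moves out of $(a,b)$ project precisely onto the legal moves out of $b$ in $D$ (with ``stay'' projecting to ``stay''), and conversely any walk in $D$ starting at $b$ lifts to a walk in $L(D)$ starting at $(a,b)$. I would also record the structural fact that any two arcs sharing a head are \emph{out-twins} in $L(D)$ (they have identical out-neighbourhoods); this is what will let the cops treat the robber's head as its ``true'' position.

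For $\cop(D)\le\cop(L(D))$, I would start from a winning strategy for $k=\cop(L(D))$ cops on $L(D)$ and simulate it on $D$. Each agent in $D$ is represented by the arc it most recently traversed: a token sitting at $v$ having arrived from $u$ is encoded as the node $(u,v)$ of $L(D)$, and strong connectivity guarantees that every vertex has an in-arc, which lets me fix such an encoding for the initial positions as well. The $D$-cops then play exactly the moves prescribed by the $L(D)$-strategy on these encodings, which is always legal because a prescribed step from $(x,y)$ to $(y,z)$ projects to the legal step $y\to z$ in $D$. The robber's genuine moves in $D$ induce legal robber moves on the encoding arcs, so the $L(D)$-strategy stays valid and eventually places a cop on the robber's arc; since equality of arcs forces equality of heads, this is already a capture in $D$. (If heads happen to coincide earlier, the cops have won in $D$ even sooner, which only helps.)

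The reverse inequality $\cop(L(D))\le\cop(D)$ is where the real difficulty lies. Here I would take a winning strategy for $k=\cop(D)$ cops on $D$ and have the $L(D)$-cops shadow it through $\pi$: the head of the robber's arc performs a legal $D$-walk, the cops move their own heads according to the $D$-strategy, and after finitely many rounds the $D$-strategy forces some cop's head onto the robber's head. The obstruction is that coincidence of heads is strictly weaker than coincidence of arcs: the cop occupies some $(Q,P)$ while the robber occupies $(u,P)$, and because the robber moves after the cops it can always slip to a fresh arc $(P,w)$ before the cop commits, leaving the cop trailing exactly one arc behind. This is precisely the phenomenon that forces two cops on a directed cycle, so no purely local fix is available, and the projection argument alone cannot finish.

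The hard part will therefore be to promote a head-capture into a genuine arc-capture, and this is exactly where strong connectivity must be used. The idea I would pursue is to exploit the out-twin structure: since every arc with head $P$ offers the cops the same future, a cop that has matched the robber's head retains complete freedom in the tail of the arc it occupies, and strong connectivity lets the cops route a cop so that it approaches the robber's vertex along the robber's own in-arc $(u,P)$ --- equivalently, so that it occupies the arc the robber has just vacated at the instant the $D$-strategy pins the robber's head. Making this precise --- arranging the approach direction without surrendering the head-capture the $D$-strategy has established, and separately handling the case where the robber stalls (in which case strong connectivity alone lets a free cop navigate directly onto the robber's arc) --- is the crux of the proof, and I expect it to be the most delicate step. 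The preceding proposition that $L(D)$ is strongly connected whenever $D$ is will be precisely what guarantees these routing moves are always available.
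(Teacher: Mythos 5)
Your first direction ($\cop(D)\le\cop(L(D))$, i.e.\ $\cop(D)\le\ecop(D)$) is correct and is essentially the paper's own argument: encode each agent in $D$ by the arc it most recently traversed, use strong connectivity to fix initial encodings, run the $L(D)$-strategy on the encodings, and note that equality of arcs forces equality of heads. No complaints there.

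The second direction is where you stop short, and you say so yourself: you correctly identify that shadowing the $D$-strategy through the head projection only delivers a head-capture (cop on $(Q,P)$, robber on $(u,P)$ with $u\neq Q$), but you leave the promotion of a head-capture to an arc-capture as an unexecuted ``crux,'' and the mechanism you lean on --- routing a cop so that it arrives at $P$ along the robber's own in-arc $(u,P)$ at the instant the $D$-strategy pins the robber's head --- is not something you can arrange: the $D$-strategy is a black box, and you have no control over which in-neighbour of $P$ the capturing cop approaches from. The actual fix is much simpler, and you already have both ingredients on the table. Once some cop occupies an arc $(Q,P)$ with the same head as the robber's arc, that cop simply waits: by the out-twin property you yourself recorded, every arc the robber could move to is an out-neighbour of $(Q,P)$, so the instant the robber steps to some $(P,w)$ the waiting cop steps there too and captures. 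The robber is therefore frozen on $(u,P)$, and a second cop --- which exists because any strongly connected digraph on at least two vertices has cop number at least $2$, so $k\ge 2$ --- navigates through the strongly connected digraph $L(D)$ onto $(u,P)$ and finishes. That two-case split (robber moves and is caught by the pinning cop; robber stalls and is caught by a teammate) is exactly how the paper closes the argument; no control over the approach direction is needed. As written, your proposal is an accurate plan whose decisive step is missing, and whose proposed replacement for that step would not go through.
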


\begin{proof}
First, suppose that $k$ cops have a winning strategy on $D$.  Since $D$ is strongly connected, we know that $k \geq 2$.  If the $k$ cops begin at vertices $v_1, \ldots, v_k$ in the regular game (repetition is allowed), then assign cop $C_i$ to an arc whose head is $v_i$ (such an arc exists because $D$ is strongly connected).  The robber then chooses a starting arc. The cops will play a strategy which mimics the regular game.  More precisely, if a cop occupies an arc $(a,b)$, then it imagines itself being on $b$ in the regular game.  If the next move in the regular game is to proceed to the node $c$, then the cop moves to the arc $(b,c)$ in the arc-version.  If the next move is to pass, then the cop passes.  When the robber moves from arc $(u,v)$ to arc $(v,w)$, then cops imagine that it has moved from $v$ to $w$ in the regular game.  It follows that at least one cop can eventually occupy an arc whose head is the same as the robber's arc.  If this is the same arc, then the game is finished.  If it is not, then any move by the robber will move it to an edge whose tail is incident to the head of the capturing cop's position.  If the robber stays put, then one of the remaining cops (recall that $\cop(D) \geq 2$) may move to occupy the robber's edge since $D$ is strongly connected.  Thus $k$ cops can catch the robber in the arc-version of the game.

Now, assume that $k$ cops have a winning strategy in the arc-version of the game.  If cop $C_i$ were to occupy the arc $(a_i,b_i)$ to start the arc-version, then $C_i$ occupies $b_i$ to start the node version.  If the robber chooses to start at $v$, then the cops imagine the robber occupies any arc whose head is $v$.  The cops then play a strategy where, instead of moving from $(a_i,b_i)$ to $(b_i,c_i)$ in the arc version, they move from $b_i$ to $c_i$ in the node version, and the robber is imagined to occupy the arc consisting of its current position as its head and its previous position as its tail.  When a cop catches the robber's arc in the imagined arc version of the game, she will by definition occupy the same node as the robber (the head of the arc).
\end{proof}

Note that the condition of being strongly connected cannot be dropped.  If $D$ is a star with all arcs oriented toward the leaves, then $\cop(D) = 1$ while $\ecop(D) = |V(D)|-1$ (one cop is needed on each arc).

We note that one could also prove Theorem \ref{linedigraph} using structural characterizations of line digraphs.  In \cite{LW98}, a coreflexive set (henceforth coreset) of vertices in a digraph $D$ is defined to be a minimal set $X \subset V(D)$ such that $N^-\left(N^+(X)\right) = X$.  It is shown that the collection of all coresets of $D$ partitions $V(D)$.  The authors then prove a number of equivalent characterization of line digraphs (see \cite{LW98}, Theorem 6), which imply such classic characterizations as Heuchenne's condition and the Geller-Harary condition.  One of these is as follows:

\begin{thm}\cite{LW98}
Let $D$ be a digraph, and let $U_1, \ldots U_k$ be the partition of $V(D)$ into coresets.  If, for each $i \in [k]$, the subdigraph induced by $U_i \cup N^+(U_i)$ contains every possible arc from $U_i$ to $N^+(U_i)$ and no others, then $D$ is a line digraph.
\end{thm}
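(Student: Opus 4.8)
The plan is to reduce the statement to the classical Heuchenne characterization of line digraphs: a loopless digraph $D$ is a line digraph if and only if, whenever $(a,b),(a,d),(c,b)\in A(D)$, one also has $(c,d)\in A(D)$ --- equivalently, any two vertices have equal or disjoint out-neighbourhoods. Since this characterization (already alluded to in the paragraph preceding the theorem) is exactly the kind of condition that the coreset hypothesis is built to control, it suffices to verify Heuchenne's condition directly. The entire argument will then rest on one short lemma extracted from the defining equation of a coreset, $N^-(N^+(U_i))=U_i$.

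First I would prove the lemma: if $x\in N^+(U_i)$ then $N^-(x)\subseteq U_i$. This is immediate, since for any $y\in N^-(x)$ the arc $(y,x)$ together with $x\in N^+(U_i)$ gives $y\in N^-\bigl(N^+(U_i)\bigr)=U_i$. (Using the ``every arc from $U_i$ to $N^+(U_i)$'' hypothesis one actually gets the equality $N^-(x)=U_i$, but only the inclusion is needed here.) With this in hand, Heuchenne's condition falls out: given $(a,b),(a,d),(c,b)\in A(D)$, let $U_i$ be the coreset containing $a$. Then $b\in N^+(U_i)$, so by the lemma $c\in N^-(b)\subseteq U_i$; hence $a$ and $c$ lie in the same coreset. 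By the hypothesis that the subdigraph induced by $U_i\cup N^+(U_i)$ contains every arc from $U_i$ to $N^+(U_i)$, the vertex $c$ is joined to all of $N^+(U_i)$; and $d\in N^+(a)\subseteq N^+(U_i)$, so $(c,d)\in A(D)$. This verifies Heuchenne's condition, and the theorem follows.

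As an alternative I would exhibit the preimage digraph $H$ explicitly. The same lemma shows that distinct coresets have disjoint out-neighbourhoods (if $x\in N^+(U_i)\cap N^+(U_j)$ then $U_j\subseteq N^-(x)\subseteq U_i$, forcing $i=j$), so every non-source vertex of $D$ lies in a \emph{unique} $N^+(U_j)$. One then lets $H$ have vertex set $\{U_1,\dots,U_k\}$ together with one fresh vertex for each source of $D$, and maps each $x\in V(D)$ to the arc $(t(x),h(x))$, where $h(x)$ is the coreset containing $x$ and $t(x)$ is the unique coreset with $x\in N^+(t(x))$ (or the fresh source-vertex when $x$ is a source); a routine check gives $L(H)\cong D$. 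The hard part in this second route is precisely the well-definedness of $t(x)$, i.e. the uniqueness of the ``tail-joint,'' which is exactly the coreset lemma above; the remaining work is bookkeeping --- treating the sources of $D$ separately and allowing parallel arcs in $H$ to accommodate any vertices of $D$ sharing both in- and out-neighbourhoods. I expect the Heuchenne reduction to be the cleaner and less error-prone write-up, with the explicit construction included only as a structural remark.
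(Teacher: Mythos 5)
The paper offers no proof of this statement: it is imported verbatim from \cite{LW98} (their Theorem~6) and used only as a structural remark, so there is nothing in-paper to compare your argument against. That said, your blind proof is correct and would make the statement self-contained. The key lemma is exactly the right extraction from the coreset equation: $N^-\bigl(N^+(U_i)\bigr)=U_i$ immediately gives $N^-(x)\subseteq U_i$ for every $x\in N^+(U_i)$, and the Heuchenne verification then goes through without a hitch --- in any configuration $(a,b),(a,d),(c,b)$ the vertex $c$ is forced into the coreset of $a$, so $(c,d)$ is present by the completeness hypothesis. Two small observations. First, you never invoke the ``and no others'' clause; that clause is only needed for the converse direction of the characterization in \cite{LW98}, so you have in fact proved a marginally stronger implication. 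Second, the Heuchenne reduction imports an external characterization (and your looplessness hedge is a slight mismatch with the general digraphs of the statement), whereas your alternative explicit construction of the preimage $H$ --- head of the arc for $x$ equal to the coreset containing $x$, tail equal to the unique $U_j$ with $x\in N^+(U_j)$, fresh vertices for sources, parallel arcs permitted --- is also correct (the well-definedness of the tail is exactly your disjointness claim, which follows from the same lemma) and is entirely self-contained. If a proof were to be added to the paper, the explicit construction is the one to lead with, with the Heuchenne reduction mentioned as the conceptual shortcut.
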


It is easy to see that, if $D$ is a strongly connected digraph, then $L(D)$ has $|V(D)|$ coresets, each of which is a stable set and each of which corresponds to the set of incoming arcs for some vertex.  Further, if one contracts each coreset in $L(D)$, the result is a digraph isomorphic to $D$ (each contracted coreset becomes the vertex to which its arc point in $D$).  Thus, any strategy one plays on $D$ can be translated to the coresets of $L(D)$ and (since coresets are stable sets in $L(D)$) the converse holds as well.

A natural problem arises from this ability to move between line digraphs and digraphs obtained by contracting coresets -- is there any relation between the cop number of an arbitrary digraph and that of the digraph obtained by contracting its coresets and deleting parallel arcs?  Unfortunately, the answer appears to be ``no'', in general.

\begin{prop}\label{contract}
For arbitrarily large $n$, there exists an $n$-vertex strongly connected ograph $D$ for which $\cop(D) \in \Om(\sqrt{n})$ whose coreset contraction has $Y$ has $\cop(Y)=2$.
\end{prop}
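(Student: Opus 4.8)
The plan is to build $D$ as a ``cyclic blow-up'' of a directed $5$-cycle, so that its coresets are exactly five large layers and its coreset contraction is a directed $5$-cycle (which has cop number $2$, as noted earlier in this section for directed cycles), while choosing the arcs between consecutive layers to have both high out-degree and large undirected girth, so that Lemma~\ref{dirlowbound} forces $\cop(D)$ to be large.

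Concretely, fix a prime power $q$, set $p = q^2+q+1$, and take five layers $L_0,\dots,L_4$, each a copy of $\mathbb{Z}_p$, with all arcs directed from $L_i$ to $L_{i+1}$ (indices mod $5$). Let $B \subseteq \mathbb{Z}_p$ be a Singer (perfect) difference set of size $q+1$, so that every nonzero residue is \emph{uniquely} a difference of two elements of $B$, and partition $B$ into five parts $S_0,\dots,S_4$, each of size $\Theta(\sqrt p)$. Between $L_i$ and $L_{i+1}$ I would place the arcs $u \mapsto u+s$ for $s \in S_i$. Then every vertex has out-degree $|S_i| = \Theta(\sqrt p) = \Theta(\sqrt n)$, where $n = 5p$, and a short Cauchy--Davenport argument shows $D$ is strongly connected.

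First I would identify the coresets. Since all arcs run between consecutive layers, for $X \subseteq L_i$ one has $N^-(N^+(X)) \subseteq L_i$, with $u \in N^-(N^+(X))$ exactly when $u$ shares an out-neighbour with some vertex of $X$; hence the fixed points of $X \mapsto N^-(N^+(X))$ inside $L_i$ are the unions of connected components of the ``common out-neighbour'' graph $G_i$ on $L_i$, where $u \sim u'$ iff $u-u' \in S_i - S_i$. As $G_i$ is a Cayley graph on $\mathbb{Z}_p$ with connection set $S_i - S_i \neq \{0\}$ and $p$ is prime, $G_i$ is connected, so each $L_i$ is a single coreset; a short minimality check rules out coresets spanning two layers. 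Contracting the five coresets and deleting parallel arcs then yields precisely the directed $5$-cycle $y_0 \to y_1 \to \cdots \to y_4 \to y_0$, giving $\cop(Y)=2$.

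It remains to show $\cop(D) = \Om(\sqrt n)$, for which I would invoke Lemma~\ref{dirlowbound}; the hard part is verifying that the underlying graph has girth at least $5$. Three types of short cycle must be excluded. Cycles confined to two consecutive layers are $4$-cycles in the bipartite graph between $L_i$ and $L_{i+1}$, ruled out because $S_i$, as a subset of the Sidon set $B$, has all pairwise differences distinct. ``Around'' cycles are excluded since, with five layers, any layer-respecting closed walk has length at least $5$. The subtle case is the ``cross'' $4$-cycle, in which two vertices of a layer share both an in-neighbour and an out-neighbour; this would force a common nonzero residue in two difference sets $S_j - S_j$ and $S_{j+1} - S_{j+1}$ of consecutive layers, which is impossible because the uniqueness of differences in the Singer set $B$ makes $S_0-S_0,\dots,S_4-S_4$ pairwise disjoint off $0$. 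With girth at least $5$ established, Lemma~\ref{dirlowbound} gives $\cop(D) \ge \delta^+(D) = \min_i |S_i| = \Theta(\sqrt n)$, completing the plan. The main obstacle throughout is the tension between wanting the layers to be genuine coresets (which needs each $G_i$ connected, i.e.\ many shared out-neighbours) and wanting large girth (which forbids shared neighbours on both sides at once); the Singer-set partition is exactly what reconciles the two.
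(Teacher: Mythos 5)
Your architecture is genuinely different from the paper's in its details. The paper builds a four-layer digraph $A_1 \to B_1 \to A_2 \to B_2 \to A_1$ in which each consecutive pair of layers induces the point--line incidence graph of a projective plane of order $q$, so that the coresets are the four layers, the contraction is a directed $4$-cycle, and Lemma~\ref{dirlowbound} applies; you instead take a five-layer cyclic blow-up of $\mathbb{Z}_p$ with arcs prescribed by a partition of a Sidon set. Both proofs share the same skeleton: a layered cyclic structure whose layers are exactly the coresets, a contraction equal to a short directed cycle (hence cop number $2$), and minimum out-degree $\Th(\sqrt n)$ combined with undirected girth at least $5$. Your girth analysis is, if anything, more careful than the paper's, since you explicitly rule out the ``cross'' $4$-cycles in which two vertices of one layer share both a common in-neighbour and a common out-neighbour --- precisely the configuration one must worry about when gluing incidence-type structures around a cycle --- via the disjointness of the difference sets $S_j - S_j$.

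There is, however, one genuine gap: you assert that $p = q^2+q+1$ is prime, and you lean on this twice --- once to conclude that the Cayley graph $G_i$ with connection set $(S_i-S_i)\setminus\{0\}$ is connected (so that each layer is a single coreset rather than a union of several), and once in the Cauchy--Davenport argument for strong connectivity. But $q^2+q+1$ is composite for many prime powers $q$ (for instance $q=4$ gives $21$ and $q=7$ gives $57$), and whether $q^2+q+1$ is prime for infinitely many $q$ is an open problem (an instance of Bunyakovsky's conjecture), so you cannot simply restrict to ``good'' $q$ and still obtain arbitrarily large $n$. For composite $p$ the set $(S_i-S_i)\setminus\{0\}$ has only about $|S_i|^2 \approx p/25$ elements and could in principle lie inside a proper subgroup of $\mathbb{Z}_p$, in which case $L_i$ would split into several coresets and the contraction would not be a $5$-cycle; Cauchy--Davenport also requires prime modulus. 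The repair is straightforward and worth recording: decouple the Sidon set from the Singer construction. For any prime $p$, take an integer Sidon set $B \subseteq \{1,\dots,\lfloor p/5 \rfloor\}$ of size $\Th(\sqrt p)$; since all of its pairwise differences have absolute value less than $p/2$, it remains a Sidon set modulo $p$, and every step of your argument then goes through verbatim with $n = 5p$ ranging over arbitrarily large values.
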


\begin{proof}
Let $D$ be the digraph with vertex set $A_1 \cup A_2 \cup B_1 \cup B_2$ such that $D[A_i \cup B_j]$ is the point-line incidence graph of the projective plan of order $q$, where $A_i$ is the set of points and $B_j$ is the set of lines.  For each edge $(a,b) \in A_i \times B_i$, we orient the arc as $(a,b)$.  For each edge $(a,b) \in A_i \times B_{i+1}$ (indices taken modulo $2$), we orient the edge $(b,a)$.  It is easy to see that this graph is strongly connected and has $d^-(v) = d^+(v) = q+1$ for every $v \in V(D)$.  Since the undirected girth is $6$, $\cop(D) \geq d^+(v) = \frac{q-1}{2}$ by Lemma \ref{dirlowbound}.  Since $|V(D)| = 4(q^2+q+1)$, we have our desired $D$.  It is then not hard to see that of $A_1, A_2, B_1, B_2$ is a coreset partition, $Y$ is a directed cycle of length $4$, and so $\cop(Y) = 2$.
\end{proof}

While this proposition is illustrative of how different the two cop numbers may be, it is also the special case of a much more general result, which states that if one iterates this procedure, one \textit{always} obtains a digraph with low cop number.  Let $D_0$ be an arbitrary digraph and define the sequence $\{D_n\}_{n=0}^{\infty}$ where $D_{i}$ ($i \geq 1$) is obtained by contracting all coresets of $D_{i-1}$; call $\{D_n\}_{n=0}^{\infty}$ the coreset contraction sequence for $D_0$.  

\begin{thm}\cite{LW98}
For any digraph $D_0$, the coreset contraction sequence $\{D_n\}_{n=0}^{\infty}$ converges either to a directed path or to a directed cycle with a directed path (possibly of length $0$) leaving some vertex of the cycle.
\end{thm}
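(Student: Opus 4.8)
The plan is to read the statement as the assertion that iterated coreset contraction reaches a fixed point, and then to classify those fixed points. First I would record a monovariant. The vertex set of $D_{i+1}$ is by definition the set of coresets of $D_i$, and since (by the result of \cite{LW98} quoted above) the coresets partition $V(D_i)$, we have $|V(D_{i+1})| \le |V(D_i)|$, with equality exactly when every coreset of $D_i$ is a single vertex. Because $|V(D_i)|$ is a non-negative integer it is eventually constant; from that point on every coreset of $D_i$ is a singleton, so contracting the (singleton) coresets returns $D_i$ unchanged and $D_{i+1} \cong D_i$. Hence the sequence is eventually constant, and proving the theorem amounts to classifying the weakly connected digraphs all of whose coresets are singletons; I will call these \emph{reduced}.

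The heart of the proof is this classification, which I would carry out by a degree analysis followed by an arc count. Unwinding the condition $N^-(N^+(X)) = X$, the set $\{u\}$ can be a coreset only if no other vertex shares an out-neighbour with $u$; demanding this for all $u$ forces $d^-(v) \le 1$ for every $v$, since two vertices both sending an arc to $w$ would share the out-neighbour $w$. The second, dual ingredient is that the coreset partition absorbs the out-degree-$0$ vertices together, so a reduced digraph can contain at most one sink. Granting these two facts, let $s$ be the number of vertices of in-degree $0$; then the number of arcs equals $\sum_v d^-(v) = |V(D_i)| - s$, while weak connectivity forces at least $|V(D_i)| - 1$ arcs, so $s \le 1$. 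If $s = 1$ the digraph is acyclic with $|V(D_i)|-1$ arcs and a unique source, hence an out-arborescence; having at most one sink it has a single leaf and is therefore a directed path. If $s = 0$ then every in-degree equals $1$, the underlying graph is unicyclic, and the digraph is a directed cycle with out-trees attached; the one-sink bound forces these to collapse to a single directed tail, giving a directed cycle with one directed path leaving a vertex (the length-$0$ case recovering a bare cycle). Finally, a quotient of a weakly connected digraph is weakly connected, so if $D_0$ is connected then every $D_i$ and the limit are connected, which rules out a disjoint union of such pieces and yields a single path or a single cycle-with-tail.

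The main obstacle is precisely the bookkeeping hidden in the second degree ingredient. The operator $N^-(N^+(\cdot))$ is asymmetric on sinks — a sink lies in the out-neighbourhood of no set and so is never recovered by $N^- \circ N^+$ — so one must be careful about how the coreset partition of \cite{LW98} accounts for out-degree-$0$ vertices, and it is exactly this accounting that produces the ``at most one sink'' bound separating genuine paths and tadpoles from arbitrary in-degree-$\le 1$ branchings such as an out-star. An alternative route that largely sidesteps this difficulty is to exploit the identity, noted above, that contracting the coresets of $L(D)$ returns $D$: the contraction sequence is then the sequence of line-digraph roots of $D_0$, and one may invoke the classical structure theory of such roots, whose minimal members are exactly the directed paths and the directed cycles carrying a directed tail.
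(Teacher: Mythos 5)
You should first be aware that the paper offers no proof of this statement: it is imported verbatim from \cite{LW98} as a black-box ingredient for the corollary that follows, so there is no in-paper argument to measure yours against and your proposal has to stand entirely on its own. On its own terms, your skeleton is the right one. The monovariant $|V(D_{i+1})|\le |V(D_i)|$, with equality exactly when every coreset is a singleton, does show the sequence stabilizes; the observation that a singleton coreset $\{u\}$ forbids any other vertex from sharing an out-neighbour with $u$ correctly forces $d^-(v)\le 1$ throughout a reduced digraph; and the arc count $|A|=\sum_v d^-(v)=|V|-s\ge |V|-1$ correctly pins the number of sources at $s\le 1$. Given the ``at most one sink'' bound, the case analysis ($s=1$ gives an out-arborescence with one leaf, i.e.\ a path; $s=0$ gives a unicyclic digraph whose pendant out-trees collapse to a single tail) is complete and correct.

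The genuine gap is the one you flag yourself: the ``at most one sink'' bound is asserted, not proved, and it is load-bearing rather than bookkeeping. Under the literal definition quoted in the paper, no set $X$ of sinks can satisfy $N^-(N^+(X))=X$ (the left-hand side is empty), so a sink belongs to no coreset at all; even the statement that coresets partition $V(D)$ already presupposes some refinement of the definition at sinks, and \emph{which} refinement is adopted is exactly what decides the theorem. If each sink were its own class, an out-star (one source, $k$ sinks, every in-degree at most $1$, weakly connected) would be a fixed point of the contraction, and it is neither a directed path nor a cycle with a tail --- so the conditions you actually establish ($d^-\le 1$, at most one source, connectivity) demonstrably do not suffice to conclude. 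Closing the gap requires stating the precise convention of \cite{LW98} for out-degree-$0$ vertices and deriving the one-sink bound from it, and your alternative route through line-digraph roots has the same status: it points at the right classical theory but defers the decisive step to it. As written, the proposal is an accurate outline with one essential lemma missing.
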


\begin{cor}
Given a digraph $D_0$ with coreset contraction sequence $\{D_n\}_{n=0}^{\infty}$, the sequence $\{\cop(D_n)\}_{n=0}^{\infty}$ converges to $1$ or $2$.
\end{cor}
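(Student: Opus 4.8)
The plan is to leverage the \cite{LW98} structural theorem stated immediately above, which asserts that the coreset contraction sequence $\{D_n\}_{n=0}^{\infty}$ eventually stabilizes at one of two possible limiting digraphs: a directed path, or a directed cycle with a directed path (possibly of length $0$) leaving one of its vertices. Since the cop number is an isomorphism invariant, once the sequence $\{D_n\}$ becomes constant the sequence $\{\cop(D_n)\}$ is likewise eventually constant, hence convergent. It therefore suffices to compute the cop number of each of the two possible limits and verify that it is $1$ or $2$.

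First I would handle the directed path. A directed path has a unique vertex of in-degree $0$ (its first vertex) and contains no directed cycle, so Proposition \ref{onesource} gives cop number exactly $1$. This is the case yielding the limit value $1$.

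Next I would treat the directed cycle with an attached outgoing path (including the length-$0$ case, a plain directed cycle) and show its cop number is exactly $2$. For the lower bound, note that every vertex of this digraph has in-degree at least $1$: each cycle vertex is entered along the cycle, and each path vertex is entered from its predecessor. Hence the digraph has no source, so by the observation recorded at the start of Section \ref{gen} its cop number is at least $2$. For the upper bound I would give an explicit two-cop strategy: one cop remains stationary on a fixed cycle vertex while the second cop advances one step around the cycle each round. Any robber that enters the attached path is confined to a dead end terminating in a sink and is captured there, so the robber is effectively restricted to the cycle; the advancing cop then squeezes the robber against the stationary cop around the (one-way) cycle until capture. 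This gives cop number exactly $2$, the limit value $2$.

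Combining the two cases, the limit of $\{D_n\}$ is a path (cop number $1$) or a cycle-with-tail (cop number $2$), so $\{\cop(D_n)\}$ converges to $1$ or $2$. The only point requiring genuine care is the two-cop upper bound for the cycle-with-tail: because all arcs of the cycle point the same way, the two cops cannot approach the robber from opposite sides, and one must instead verify the ``stationary plus advancing'' squeezing argument and confirm that the outgoing path cannot be exploited by the robber to reset his position. I expect this to be the main, though modest, obstacle.
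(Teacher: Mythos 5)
Your argument is correct and is precisely the (implicit) reasoning behind the paper's corollary, which is stated without proof as an immediate consequence of the preceding \cite{LW98} convergence theorem: the limit is a directed path (cop number $1$ by Proposition \ref{onesource}) or a directed cycle with an outgoing tail (cop number $2$ via the no-source lower bound and your stationary-plus-advancing two-cop strategy). The only caveat worth noting is that your claim of cop number \emph{exactly} $2$ in the second case can fail for degenerate cycles of length at most $2$ (a loop or a digon is cop-win, since the no-source observation relies on the absence of anti-parallel arcs), but this does not affect the corollary, whose conclusion is only that the limit lies in $\{1,2\}$.
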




\section{Optimal Strategies}\label{strategy}

In this section, we consider two aspects of game play when cops and robber all play optimally.  First, we consider the number of rounds required for $k$ cops to capture a robber in a $k$-cop win ograph.  Second, we consider some questions regarding the relative positions of the cops and robber which have been examined for undirected graphs but not for ographs (or directed graphs in general).

\subsection{Capture time}

We first note that the polynomial-time algorithm given in \cite{HM06} which determines whether or not a digraph has cop number at most $k$ also gives the minimum number of rounds in which they can win.  Thus, if $\cop(D)$ is known, then determining $\capt(D)$ is in $\mathbf{P}$.

As mentioned in the introduction, Kinnersley \cite{K18} showed that for every $k\geq 1$ there exists an $n$-vertex directed graph for which $\cop(D) =k$ and $\capt(D) \in \Om((\tfrac{n}{k})^{k+1})$.  There are two things notable about this result.  The first is that it matches the natural upper bound, as there are on the order of $n^{k+1}$ distinct combinations of where $k$ cops and a single robber may be located.  The second is that, unlike in the case of undirected graphs where cop-win graphs have capture time at most linear in $n$, there exist $n$-vertex directed graphs for which the capture time is in $\Th(n^2)$.  We present an alternate construction, first given in the thesis of one this paper's co-authors \cite{Kom13}, of an infinite family of cop-win digraphs with quadratic capture time which does not require the impressive machinery of \cite{K18}.

\begin{thm} 
\label{ring digraph}
There exists an infinite family of directed graphs on $n$ vertices with capture time $\Th(n^2)$.
\end{thm}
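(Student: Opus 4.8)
# Proof Proposal

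The plan is to construct an explicit infinite family of cop-win digraphs whose structure forces a single cop to expend a quadratic number of rounds before capture, even under optimal play. The natural approach, given the name of the theorem (\emph{ring digraph}), is to build a digraph whose vertex set is arranged in concentric directed cycles (rings), together with connecting arcs between adjacent rings. The key design principle is to make the digraph cop-win (so that one cop suffices, matching the notion of capture time for $\cop(D)=1$) while ensuring that the robber can force the cop to ``chase'' him around a long cycle many times, only being squeezed inward by one ring per full lap.

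First I would fix the construction: take $k$ concentric directed cycles $C_0, C_1, \ldots, C_{k-1}$, each of length roughly $k$, so that $n = \Theta(k^2)$. I would orient the intra-ring arcs cyclically and add inter-ring arcs so that the robber on ring $C_i$ can always retreat along his own ring, but can only be pushed to the next inner ring $C_{i+1}$ after the cop has forced him through a full revolution. The arcs must be oriented carefully so that (i) there is exactly one source and no ``escape'' back to an outer ring, guaranteeing via Proposition~\ref{onesource} (or a direct argument) that the digraph is cop-win, and (ii) on each ring the robber can maintain a constant distance from the cop for $\Theta(k)$ rounds before the geometry of the inter-ring arcs permits the cop to gain.

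The second step is the lower bound on capture time. I would exhibit an explicit robber strategy and argue that, starting on the outermost ring, the robber can guarantee survival for $\Omega(k)$ rounds on each ring before being forced inward, and that he can be forced inward at most one ring per such phase. Summing $\Theta(k)$ rounds across $\Theta(k)$ rings yields $\Theta(k^2) = \Theta(n)$ rounds against this particular strategy; to reach $\Theta(n^2)$ in terms of $n$, I would instead take each ring of length $\Theta(k)$ with $\Theta(k)$ rings but measure in the regime where the per-ring survival is $\Theta(n)$ rather than $\Theta(k)$, i.e.\ calibrate the two parameters so that the product of phase length and number of phases is genuinely quadratic in the total vertex count $n$. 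The matching upper bound $O(n^2)$ is the easier direction: I would show the cop can always make monotone progress (reducing either the robber's ring index or his cyclic distance) so that no configuration repeats more than $O(n)$ times, giving $O(n^2)$ rounds total.

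The main obstacle I anticipate is the lower-bound argument: I must prove that \emph{no} cop strategy does better than quadratic, not merely that one naive strategy is slow. This requires a potential-function or adversary argument showing that in any round the cop can decrease the relevant potential (ring index plus cyclic offset) by at most a bounded amount, so that the total decrease needed—on the order of $n$ per ring across $\Theta(\sqrt{n})$ rings, or however the calibration works out—cannot be achieved in fewer than $\Theta(n^2)$ rounds. Getting the arc orientations between rings exactly right, so that the cop is provably unable to ``shortcut'' by jumping across rings or cutting off the robber, is the delicate part, and verifying cop-winness simultaneously with the slow-capture property is where the construction must be checked most carefully.
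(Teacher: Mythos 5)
Your high-level instinct (nested directed rings, a cop-win digraph where the cop must grind out many laps) matches the spirit of the paper's construction, but your specific mechanism has a quantitative flaw that you half-notice and then do not actually repair. With $r$ concentric rings of length $\ell$ and a robber who is forced inward by one ring per full lap, the total number of rounds is on the order of $r \cdot \ell$, which equals the number of vertices $n = r\ell$. That is \emph{linear} in $n$ no matter how you ``calibrate'' the two parameters: making the rings longer or more numerous changes $n$ in exact proportion to the number of rounds, so the one-ring-per-lap mechanism can never produce quadratic capture time. Your proposed fix --- choosing a regime where per-ring survival is $\Theta(n)$ rather than $\Theta(k)$ --- is not coherent, because per-ring survival is bounded by the ring length, and the sum of the ring lengths is $n$.

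The paper escapes this trap by using only \emph{two} nested cycles whose lengths differ by one: an outer directed $k$-cycle for the robber and an inner directed $(k-1)$-cycle for the cop, on $n = 2k+1$ vertices total. Because the cycle lengths differ by exactly one, each full lap of the inner cycle advances the cop's \emph{angular phase} relative to the robber by a single step, and capture requires the cop to be at one particular phase. So the cop needs $k-1$ full laps, i.e.\ $(k-1)^2 = \Theta(n^2)$ moves, on a vertex set of size only $\Theta(k)$. The progress-per-lap is one \emph{step}, not one \emph{ring}, and the vertex count is linear rather than quadratic in $k$ --- both points are essential and both are absent from your construction. (The paper also handles the lower-bound concern you raise quite cheaply: an internal source-like vertex forces the cop's starting position, after which her moves along the inner ring are essentially forced because stepping onto the outer ring or passing loses.) To salvage your write-up you would need to replace the many-rings design with a two-cycle, off-by-one phase argument, or find some other mechanism in which the robber's survival time genuinely exceeds the vertex count by a linear factor.
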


\begin{proof}


We define a \textit{ring digraph} $R(k)$ to be a reflexive directed graph on $n = 2k+1$ vertices consisting of:
\begin{itemize}
\item an ``outer ring'' comprised of a (counterclockwise)-directed $k$-cycle
\item an ``inner ring'' comprised of a (counterclockwise)-directed $(k-1)$-cycle 
\item arcs from a vertex in the inner ring to a vertex in the outer ring configured such that $k-2$ vertices in the inner ring are incident with one such arc, and 1 vertex in the inner ring is incident with two such arcs
\item an ``internal vertex'' (marked C in Figure~\ref{ringdigraph}) that is out-directed to every vertex in the inner ring
\item an ``external vertex'' (marked R in Figure~\ref{ringdigraph}) incident with two arcs as in Figure~\ref{ringdigraph}
\end{itemize}

\begin{figure}[h!]
\centering
\includegraphics[scale=0.45]{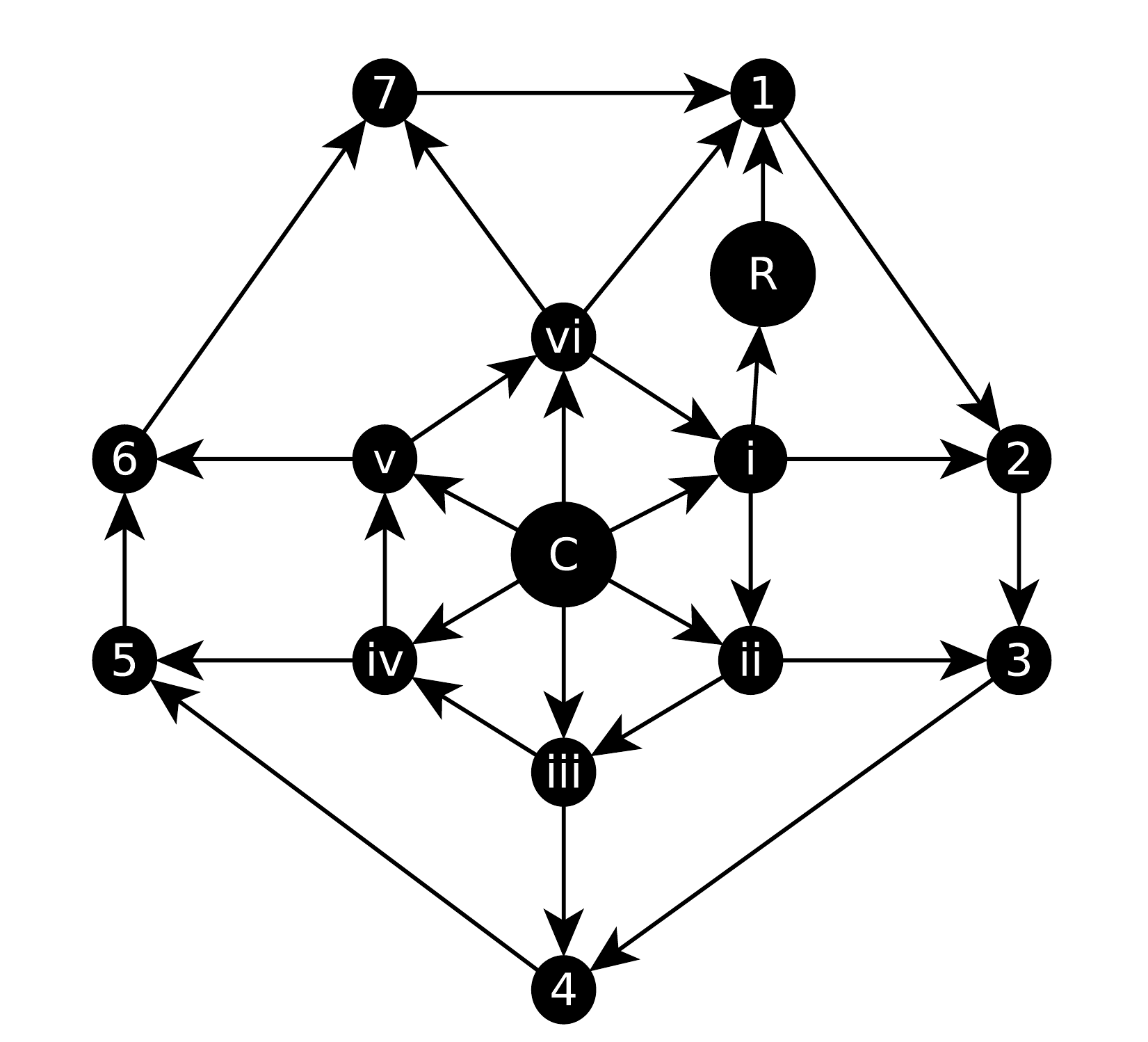}
\caption{The ring digraph $R(7)$}
\label{ringdigraph}
\end{figure}

We claim that for all $k$, $R(k)$ is cop-win and the capture time on $R(k)$ is $\Th(n^2)$.  In particular, the capture time on $R(k)$ is $(n^2-4n+3)/4$, where $n = |V(G)|$. 

We describe a strategy where the cop starts on the vertex labeled $C$ in Figure~\ref{ringdigraph} and show that with both players moving optimally, the game takes $(k-1)^2+1 = k^2 - 2k$ moves from this starting position. Then we show that this is the best starting position for the cop.

Let $C_i$ be the cop's position after the $i^{th}$ move (so $C_0$ is her starting position), and define $R_i$ analogously. Then if $C_0$ = $C$ and $R_0 = R$, then the cop must play $C_1 = i$ (the robber can remain on $R$ until the cop is on this vertex). The robber's only choice is to play $R_1 = 1$. Now the cop must move along the inner ring, as the robber can pass whenever the cop passes (and if she moves onto the outer ring, the robber wins). Now the only way that the cop can win is to be on vertex $vi$ in Figure~\ref{ringdigraph} (that is, the vertex on the inner ring with outdegree 3) while the robber is on vertex 7. That is, the cop wants to be one step behind the robber. Then the robber can either pass or move onto vertex 1, and will be captured in one move in either case. But after move 1, the cop is one step ahead of (i.e. $k{-}1$ steps behind) the robber. Therefore it will take $k{-}1$ full trips (i.e. $(k{-}1)^2$ moves) to come to this position, as every time the cop makes a full trip around the inner ring, she is one more step ahead of the robber.

If the cop's initial position was anywhere but on vertex $C$, then the robber could play $R_0 = C$ and remain there for the rest of the game, so $C_0 = C$ is the cop's only option.
 \end{proof}

Interestingly, the problem of settling the capture time in tournaments remains open:

\begin{ques}[Sl\'ivlov\'a \cite{S15}]
Does there exist a constant $t$ such that, for every tournament $T$, $\capt(T) \leq t$? 
\end{ques}

\subsection{Relative positions of cops and robber}

For clarity, we remind the reader that ``optimal'' play means that the cop attempts to win in as few rounds as possible while the robber attempts to prolong his capture as much as possible.
Two interesting conjectures related to the positions of  the game's agents in the undirected game were posed by Hahn, first via private communication to the research community and then published in \cite{B+13}:

\begin{conj}\cite{B+13}
If $G$ is a cop-win (undirected) graph, then the distance between the cop and robber does not increase in consecutive rounds in optimal play.
\end{conj}

\begin{conj}\cite{B+13}
If $G$ is a cop-win (undirected) graph, then the cop visits each vertex at most once in optimal play.
\end{conj}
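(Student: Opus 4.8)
The plan is to read the final statement for what it is: a \emph{conjecture} of Hahn, and one that the paper's own framing flags as false, since the introduction reports that the corresponding question was ``answered positively'' in \cite{B+13} --- that is, a cop \emph{can} be forced to revisit a vertex in optimal play. So rather than prove the statement, I would set out to refute it by exhibiting a cop-win graph in which optimal play forces the cop to repeat a vertex. Before doing so I would briefly record why the natural proof attempt fails: a cop-win graph admits a corner (dismantling) ordering, and the associated retract strategy is tempting to analyze, but retracting toward a corner says nothing about the \emph{fastest} capture, which is what ``optimal'' refers to here. To make the problem precise I would define $\capt(u,v)$ to be the number of rounds to capture with the cop on $u$ and the robber on $v$ under best play, computed by backward induction (value iteration) on the finite configuration space $V(G)\times V(G)$ with the cop minimizing and the robber maximizing; a cop move from $(u,v)$ is \emph{optimal} when it realizes $\min_{u'}\max_{v'}\bigl(1+\capt(u',v')\bigr)$, and an optimal play is any trajectory following optimal moves on both sides.

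First I would search for a small witness, either by hand or, in the computational spirit used elsewhere in this paper, by enumerating small cop-win graphs and reading forced revisits directly off their $\capt$-tables. The mechanism I would aim to engineer is a ``feint-and-return'': the cop sits near a branch point, the robber threatens to escape up one of two branches, the cop is forced to commit far enough along that branch to seal the escape, and the robber then doubles back, forcing the cop to pass back through an already-occupied vertex to head off the second route. The design constraint is that the threatened branches must not be mere dead ends (otherwise the cop could simply ignore one and trap the robber there), yet the whole graph must remain dismantlable; I would certify $\cop(G)=1$ by exhibiting an explicit corner-elimination ordering, and then locate a starting configuration whose every optimal continuation revisits a cop vertex.

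The hard part will be the universal quantifier hidden in ``in optimal play'': it is not enough to produce one optimal trajectory with a repeat, since the robber may have several value-maximizing responses and the cop several value-minimizing replies, and the conjecture fails only if \emph{every} optimal play forces a revisit. Handling this cleanly means proving that at the critical configuration all minimum-value cop moves lead, against the robber's value-maximizing answers, back to a previously visited vertex --- a finite statement I would verify directly from the $\capt$-table of the small example, which is exactly why a small, fully computed witness is preferable to an abstract argument. Finally, for the oriented extension promised in the introduction I expect the template to become easier rather than harder: directedness lets one build one-way escape corridors that the cop must traverse and then retrace, so I would orient the undirected gadget's branches into such corridors, confirm the result is cop-win via Proposition \ref{onesource} (a single source and no directed cycles), and then read the forced revisit off the directed $\capt$-table in the same way.
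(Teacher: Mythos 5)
You are right to read the statement as a false conjecture and to aim at refuting it rather than proving it; that matches the paper's framing (the undirected refutation is in \cite{B+13}, and the paper's own contribution is the oriented analogue, established by the explicit construction in Figure \ref{mustrevisit}). Your backward-induction formalization of optimal play on $V(G)\times V(G)$ and your insistence on verifying that \emph{every} optimal continuation revisits a vertex are exactly the right way to handle the hidden universal quantifier. But as written the proposal stops at a search plan: no witness graph is produced, so there is nothing yet to check.

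The more serious problem is that your plan for the oriented case is self-defeating. You propose to certify that the oriented gadget is cop-win via Proposition \ref{onesource}, i.e., by arranging a single source and \emph{no directed cycles}. In a digraph with no directed cycles there is no directed closed walk through any vertex, so the cop can never return to a vertex she has left; hence any oriented counterexample to the revisit conjecture \emph{must} contain directed cycles, and Proposition \ref{onesource} can never apply to it. The paper's construction reflects this: the graph of Figure \ref{mustrevisit} has a unique source $a$ but also contains the directed $12$-cycle $v_1\cdots v_{12}$ (which the cop traverses and then partially retraces, revisiting $v_1$) and a long directed cycle through the $u_i$ and $w_j$ on which the robber circulates. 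Its cop-win certification is a bespoke strategy analysis starting from the forced initial position $a$, not an appeal to Proposition \ref{onesource}. To repair your plan you would need to replace the DAG certification with such a hands-on argument (or the algorithmic check of \cite{HM06}), and your ``one-way escape corridors'' must be closed up into directed cycles that the robber can loop around and that the cop can re-enter.
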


These conjectures were each refuted in \cite{B+13}.  We show that analogous versions of these conjectures would not hold in the context of ographs.

\begin{thm}
There exist cop-win oriented graphs in which, if both cop and robber play optimally, the distance between the cop and robber must strictly increase in two consecutive rounds.
\end{thm}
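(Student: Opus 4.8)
The plan is to exhibit an explicit strongly connected oriented graph $D$ with $\cop(D)=1$ and then trace the (essentially unique) optimal line of play to expose two consecutive rounds in which the directed distance from the cop to the robber strictly increases. Throughout, I take the relevant distance to be $d(c,r)$, the length of a shortest directed path from the cop's vertex $c$ to the robber's vertex $r$, since this is the quantity the pursuing cop must drive to $0$. The governing idea is directed asymmetry: unlike the undirected setting, a cop cannot simply retrace her steps, so to manoeuvre into a dominating position she may be obliged to travel the long way around a directed cycle, and while doing so the only available arcs carry her to vertices from which the robber's current location is farther away.

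Concretely, I would build $D$ around a directed cycle $v_0\to v_1\to\cdots\to v_{\ell-1}\to v_0$ that serves as the robber's escape track, together with a small control gadget (a directed path or a second short cycle joined to the main cycle at a single vertex or along a single arc) through which the cop must pass in order to reach a vertex that dominates the robber. The gadget is designed so that: (i) there is exactly one source, or more robustly an explicit one-cop winning strategy, so that $\cop(D)=1$; and (ii) the sole entry point of the gadget into the escape track lies just behind the robber in the cyclic direction, so that once the cop commits to the gadget she is temporarily as far as $\ell-1$ directed steps from the robber. Verifying the cop-win claim would proceed either by checking that the relevant portion of $D$ has a single source and no directed cycle and invoking Proposition \ref{onesource}, or, since the escape track is a genuine directed cycle, by writing down an explicit pursuit that corners the robber (as in the ring digraph of Theorem \ref{ring digraph}).

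The heart of the argument is the analysis of optimal play. I would first pin down the robber's optimal response: whenever the cop makes a move that fails to shrink the robber's reachable territory, the robber answers by advancing one step along the escape track, which increases $d(c,r)$. I would then show that at the critical moment the cop has no move that both keeps her on a shortest-capture trajectory and avoids increasing $d(c,r)$: every distance-nonincreasing move either stalls (allowing the robber to prolong the game, contradicting optimality of the cop) or lets the robber escape entirely. Feeding these two facts into two successive rounds yields $d(c_i,r_i)<d(c_{i+1},r_{i+1})<d(c_{i+2},r_{i+2})$, as required.

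The main obstacle is the word \emph{must}: the statement asserts that the increase is forced in \emph{every} optimal play, not merely that some optimal play exhibits it. Establishing this requires a complete case analysis over all of the cop's legal moves at the two critical rounds, certifying for each alternative that it is strictly suboptimal (raises the capture time) while every optimal alternative raises the distance. I expect this bookkeeping --- matching each candidate cop move to a robber refutation and comparing the resulting capture times --- to be the delicate part, and the gadget should be kept as small as possible (ideally so that the optimal cop trajectory is unique) precisely to keep this case analysis finite and transparent, mirroring the explicit, figure-driven verifications used elsewhere in this paper.
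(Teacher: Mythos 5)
Your guiding intuition is exactly the mechanism the paper uses: a long directed ``escape'' cycle on which the robber rides, attached to a structure that forces the cop to travel the long way around a directed cycle to reach the unique vertex that dominates the robber's current stretch of track, with the directed distance $d(c,r)$ growing as she does so. (In the paper's example the cop starts at the sole source $a$, steps to $v_1$ on a $12$-cycle, and must then walk $v_1,v_2,\dots,v_7$ to reach the vertex dominating the robber's side; the distance jumps from $2$ to $6$ in the process.) So the approach is right in spirit.

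However, what you have written is a plan, not a proof, and the gap is exactly the one you name yourself. You never exhibit a concrete oriented graph: the cycle length $\ell$, the gadget, the attachment arcs, and the out-neighbourhoods of the gadget vertices are all left unspecified, so there is nothing on which to run the verification. Consequently none of the three things the theorem requires is actually established: (i) that the graph is cop-win (your two suggested routes are incompatible as stated --- Proposition~\ref{onesource} requires \emph{no} directed cycles, yet your construction is built around one, so you would need an explicit pursuit argument, which you do not give); (ii) that in \emph{every} optimal play the cop is forced onto the long detour --- this is the ``must'' you flag, and it needs a finite case analysis showing each distance-nonincreasing cop move strictly lengthens the capture time (e.g.\ that entering the robber's cycle is futile because a robber on a directed cycle can only be captured from outside it); and (iii) that along the forced detour the distance increases in two \emph{consecutive} rounds, which depends on the actual adjacencies you have not fixed. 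Until a specific digraph is written down and those checks carried out, the statement is not proved.
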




\begin{proof}
Consider the graph in Figure \ref{distanceex}, where $n$ is taken to be a sufficiently large positive integer.  The vertex $a$, whose outneighbourhood is a $\{v_i \,\mid\, 1 \leq i \leq 12\}$ (not all arcs are drawn), is a source and hence the starting point for a single cop.  The vertices $w_1,w_2,...,w_{n-1},w_{n}$ are oriented from $w_i$ to $w_{i+1}$ and the vertices $u_1,u_2,...,u_{n-1},u_{n}$ are oriented from $u_i$ to $u_{i-1}$.
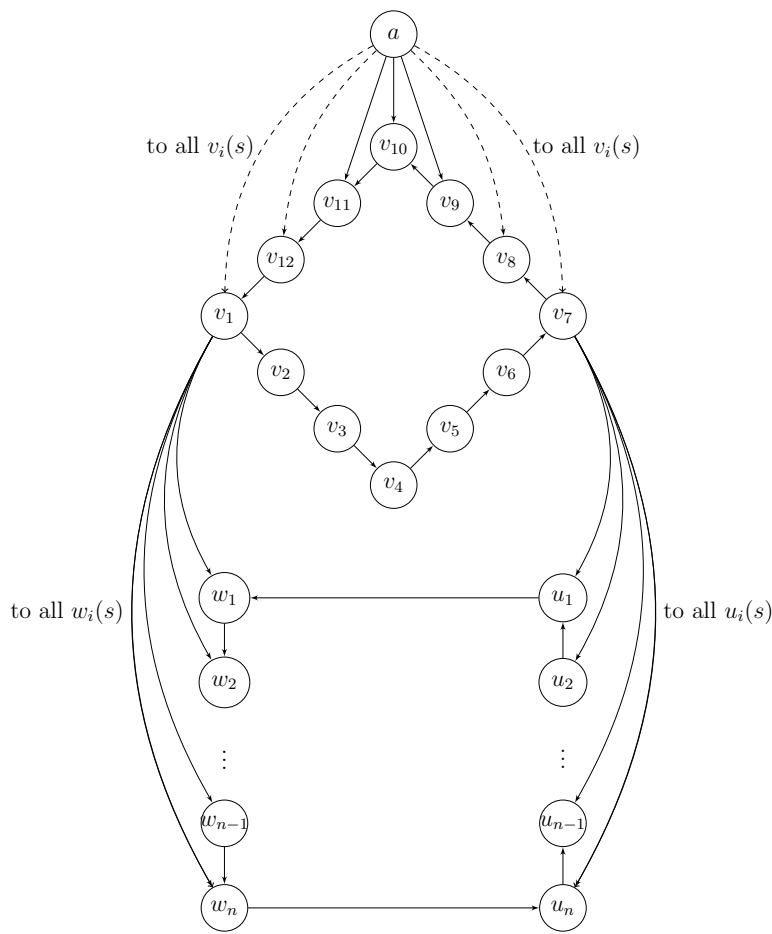
\begin{figure}[h!]
\begin{center}
\scalebox{0.75}{
\begin{tikzpicture}

\tikzset{vertex/.style = {shape=circle,draw,minimum size=2 em}}
\tikzset{edge/.style = {->,> = latex'}}

\node[vertex] (a) at  (3,5) {$a$};

\node[vertex, label=center:$v_{1}$] (m) at (0,0) {};
\node[vertex, label=center:$v_{12}$] (l) at (1,1) {};
\node[vertex, label=center:$v_{11}$] (k) at (2,2) {};
\node[vertex, label=center:$v_{10}$] (j) at (3,3) {};
\node[vertex, label=center:$v_{9}$] (i) at (4,2) {};
\node[vertex, label=center:$v_{8}$] (h) at (5,1) {};
\node[vertex, label=center:$v_{7}$] (g) at (6,0) {};
\node[vertex, label=center:$v_{6}$] (f) at (5,-1) {};
\node[vertex, label=center:$v_{5}$] (e) at (4,-2) {};
\node[vertex, label=center:$v_{4}$] (d) at (3,-3) {};
\node[vertex, label=center:$v_{3}$] (c) at (2,-2) {};
\node[vertex, label=center:$v_{2}$] (b) at (1,-1) {};

\node[vertex] (n) at (6,-5) {$u_1$};
\node[vertex] (o) at (6,-6.5) {$u_2$};
\node[vertex, label=center:$u_{n-1}$] (p) at (6,-9) {};
\node[vertex, label=center:$u_{n}$] (q) at (6,-10.5) {};

\node[vertex] (r) at (0,-5) {$w_1$};
\node[vertex] (s) at (0,-6.5) {$w_2$};
\node[vertex, label=center:$w_{n-1}$] (t) at (0,-9) {};
\node[vertex, label=center:$w_{n}$] (u) at (0,-10.5) {};


\draw[edge] (b) to (c);
\draw[edge] (c) to (d);
\draw[edge] (d) to (e);
\draw[edge] (e) to (f);
\draw[edge] (f) to (g);
\draw[edge] (g) to (h);
\draw[edge] (h) to (i);
\draw[edge] (i) to (j);
\draw[edge] (j) to (k);
\draw[edge] (k) to (l);
\draw[edge] (l) to (m);
\draw[edge] (m) to (b);

\draw[edge] (o) to (n);
\path (p) to node {\vdots} (o);
\draw[edge] (q) to (p);

\draw[edge] (r) to (s);
\path (s) to node {\vdots} (t);
\draw[edge] (t) to (u);

\draw[edge] (n) to (r);
\draw[edge] (u) to (q);

\draw[edge] (m)  to[bend right] (r);
\draw[edge] (m)  to[bend right] (s);
\draw[edge] (m)  to[bend right] (t);
\draw[edge] (m)  to[bend right] (u);

\draw[edge] (g)  to[bend left] (n);
\draw[edge] (g)  to[bend left] (o);
\draw[edge] (g)  to[bend left] (p);
\draw[edge] (g)  to[bend left] (q);

\draw[edge] (a)  to (j);
\draw[edge] (a)  to (i);
\draw[edge, dashed] (a)  to[bend left=20] (h);

\draw[edge] (a)  to (k);
\draw[edge, dashed] (a)  to[bend right=20] (l);

\path[->]
(a) edge [bend right=-30, dashed] node[right] {to all $v_i(s)$} (g)
(a) edge [bend left=-30, dashed] node[left] {to all $v_i(s)$} (m)
(g) edge [bend left] node[right] {to all $u_i(s)$} (q)
(m) edge [bend right] node[left] {to all $w_i(s)$} (u);

\end{tikzpicture}
}
\caption{A cop-win graph where the cop must increase her distance from the robber}\label{distanceex}
\end{center}
\end{figure}

In optimal play, the robber must clearly begin on some $u_i$ or $w_i$.  If the robber starts from any vertex on $w_i$, $i \neq n$, or any $u_i$, $i \neq 1$, then it will clearly be captured in 2 steps. 
Hence we have that the robber needs to start from either $w_n$ or $u_1$ to play optimally. As these two cases are symmetric, assume the robber begins at $w_n$.
In the first round, the cop moves from $a$ to $v_1$ and the robber moves to $u_n$ to avoid capture.  Note that the distance from the cop to the robber is $2$ at this point in the game, attained by the path $v_1,w_n,u_n$.  However, the cop will not move along this path, since entering the cycle induced by $\{u_1,\ldots,u_n,w_1,\ldots,w_n\}$ cannot lead to capture in optimal play (if the robber resides on a cycle, then the cop must capture him from outside the cycle).  Thus, the cop's optimal strategy is to move along the path $v_1,v_2,v_3,v_4,v_5,v_6,v_7$.  After these seven rounds have completed, the robber will occupy some $u_i$, which is in the outneighbourhood of $v_7$, and thus the cop wins on her next turn.  In this strategy, it is easy to see from the second to third rounds of the game, the distance from the cop to the robber changed from $2$ to $6$.

Finally, we note that this particular example can easily be generalize so that the increase in distance can be made arbitrarily large.  To do this, we change the cycle of length $12$ in the outneighbourhood of $a$ to be one of arbitrary even length with antipodal vertices taking the roles of $v_1$ and $v_7$ and then making $n$ sufficiently large.
\end{proof}

Finally, we show that a cop may have to revisit a vertex after having left it during optimal play.
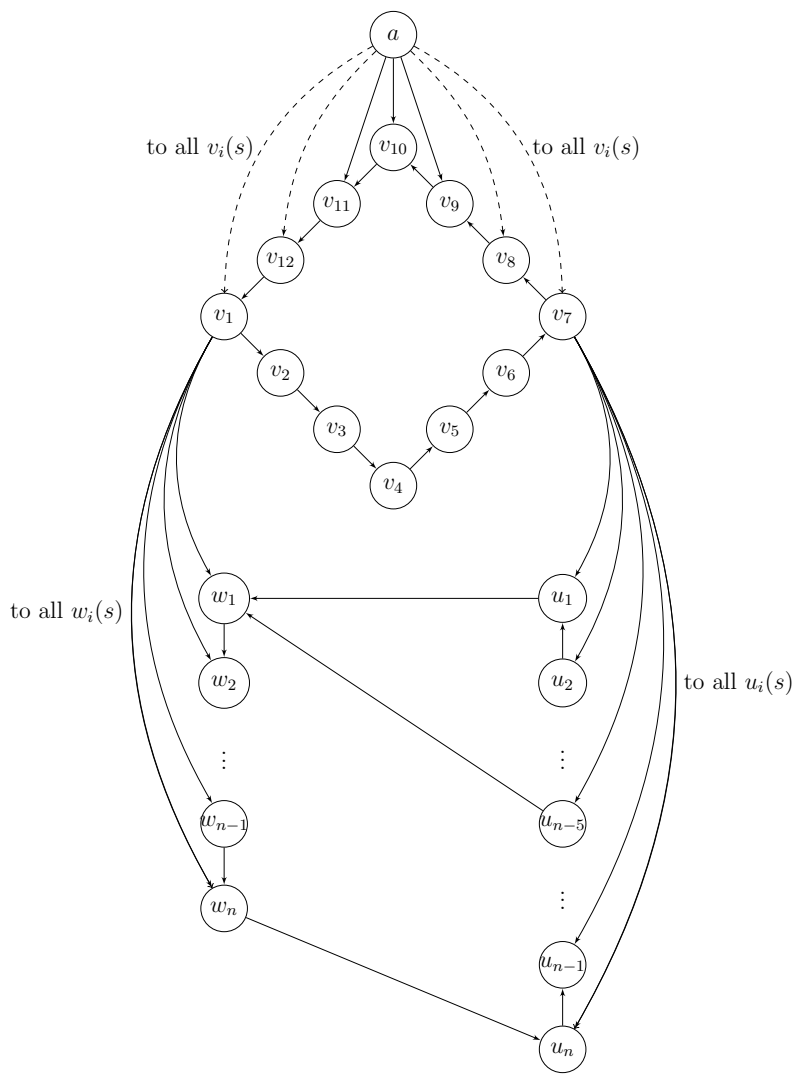
\begin{figure}[h!]
\begin{center}
\scalebox{0.75}{
\begin{tikzpicture}

\tikzset{vertex/.style = {shape=circle,draw,minimum size=2 em}}
\tikzset{edge/.style = {->,> = latex'}}

\node[vertex] (a) at  (3,5) {$a$};

\node[vertex, label=center:$v_{1}$] (m) at (0,0) {};
\node[vertex, label=center:$v_{12}$] (l) at (1,1) {};
\node[vertex, label=center:$v_{11}$] (k) at (2,2) {};
\node[vertex, label=center:$v_{10}$] (j) at (3,3) {};
\node[vertex, label=center:$v_{9}$] (i) at (4,2) {};
\node[vertex, label=center:$v_{8}$] (h) at (5,1) {};
\node[vertex, label=center:$v_{7}$] (g) at (6,0) {};
\node[vertex, label=center:$v_{6}$] (f) at (5,-1) {};
\node[vertex, label=center:$v_{5}$] (e) at (4,-2) {};
\node[vertex, label=center:$v_{4}$] (d) at (3,-3) {};
\node[vertex, label=center:$v_{3}$] (c) at (2,-2) {};
\node[vertex, label=center:$v_{2}$] (b) at (1,-1) {};

\node[vertex] (n) at (6,-5) {$u_1$};
\node[vertex] (o) at (6,-6.5) {$u_2$};
\node[vertex, label=center:$u_{n-5}$] (pp) at (6,-9) {};
\node[vertex, label=center:$u_{n-1}$] (p) at (6,-11.5) {};
\node[vertex, label=center:$u_{n}$] (q) at (6,-13) {};

\node[vertex] (r) at (0,-5) {$w_1$};
\node[vertex] (s) at (0,-6.5) {$w_2$};
\node[vertex, label=center:$w_{n-1}$] (t) at (0,-9) {};
\node[vertex, label=center:$w_{n}$] (u) at (0,-10.5) {};


\draw[edge] (b) to (c);
\draw[edge] (c) to (d);
\draw[edge] (d) to (e);
\draw[edge] (e) to (f);
\draw[edge] (f) to (g);
\draw[edge] (g) to (h);
\draw[edge] (h) to (i);
\draw[edge] (i) to (j);
\draw[edge] (j) to (k);
\draw[edge] (k) to (l);
\draw[edge] (l) to (m);
\draw[edge] (m) to (b);

\draw[edge] (o) to (n);
\path (pp) to node {\vdots} (o);
\path (p) to node {\vdots} (pp);
\draw[edge] (q) to (p);

\draw[edge] (r) to (s);
\path (s) to node {\vdots} (t);
\draw[edge] (t) to (u);

\draw[edge] (n) to (r);
\draw[edge] (u) to (q);

\draw[edge] (m)  to[bend right] (r);
\draw[edge] (m)  to[bend right] (s);
\draw[edge] (m)  to[bend right] (t);
\draw[edge] (m)  to[bend right] (u);

\draw[edge] (g)  to[bend left] (n);
\draw[edge] (g)  to[bend left] (o);
\draw[edge] (g)  to[bend left] (p);
\draw[edge] (g)  to[bend left] (pp);
\draw[edge] (g)  to[bend left] (q);

\draw[edge] (a)  to (j);
\draw[edge] (a)  to (i);
\draw[edge, dashed] (a)  to[bend left=20] (h);

\draw[edge] (a)  to (k);
\draw[edge, dashed] (a)  to[bend right=20] (l);

\draw[edge] (pp) to (r);

\path[->]
(a) edge [bend right=-30, dashed] node[right] {to all $v_i(s)$} (g)
(a) edge [bend left=-30, dashed] node[left] {to all $v_i(s)$} (m)
(g) edge [bend left] node[right] {to all $u_i(s)$} (q)
(m) edge [bend right] node[left] {to all $w_i(s)$} (u);

\end{tikzpicture}
}
\caption{A cop-win graph where the cop must revisit a vertex in optimal play}\label{mustrevisit}
\end{center}
\end{figure}

\begin{thm}
There exists a cop win oriented graph in which cop must revisit a vertex in optimal play.
\end{thm}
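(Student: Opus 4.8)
The plan is to analyse optimal play on the ograph $D$ of Figure~\ref{mustrevisit}, which is the graph of Figure~\ref{distanceex} with the single extra arc $(u_{n-5},w_1)$ added. First I would record the structural constraints on both players. The vertex $a$ is the unique source and hence the cop's forced start; since $a$ dominates every $v_i$, the robber cannot start on the directed $12$-cycle $v_1\to v_2\to\cdots\to v_{12}\to v_1$ without being captured on the cop's first move. Because no arc leaves $\{u_1,\dots,u_n\}\cup\{w_1,\dots,w_n\}$ back to any $v_i$, the robber is confined for the entire game to the gadget formed by the directed paths $w_1\to\cdots\to w_n$ and $u_n\to\cdots\to u_1$, the connecting arcs $(w_n,u_n)$ and $(u_1,w_1)$, and the chord $(u_{n-5},w_1)$. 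Symmetrically, once the cop leaves $a$ she is confined to the directed $12$-cycle, from which she can effect a capture only by a domination ``dive'': $v_1$ dominates all of the $w_i$ and $v_7$ dominates all of the $u_i$, so a robber on the $w$-path can be caught only from $v_1$ and a robber on the $u$-path only from $v_7$.

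Next I would identify the robber's optimal start and trace the forced play. The chord breaks the symmetry of Figure~\ref{distanceex} and makes $w_n$ the robber's best choice: a start on any $w_i$ with $i<n$ is caught in two moves, and a start at $u_1$ (or at the chord tail $u_{n-5}$) lets the cop play $a\to v_7\to v_8\to\cdots\to v_{12}\to v_1$ and capture on the $w$-path with no vertex repeated. From $w_n$, however, the cop can first threaten the robber only by reaching $v_1$; this forces the robber off the $w$-path through its unique exit $(w_n,u_n)$ to $u_n$. The cop must now reach $v_7$ to threaten the $u$-path, and the $6$ moves $v_1\to v_2\to\cdots\to v_7$ are exactly enough time for the robber to run $u_n\to u_{n-1}\to\cdots\to u_{n-5}$ and take the chord $(u_{n-5},w_1)$, arriving at $w_1$ just as the cop arrives at $v_7$. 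The robber is back on the $w$-path, so the cop must occupy $v_1$ once more, which she reaches only by completing the cycle $v_7\to v_8\to\cdots\to v_{12}\to v_1$ --- a revisit of $v_1$ --- after which the robber, now at distance $n-1$ from its only exit $w_n$, is captured.

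I would then argue cop-win and the optimality of this revisiting line simultaneously. The ``circle-and-threaten'' strategy above captures the robber from $w_n$ in $14$ moves, and it defeats every other robber response at least as quickly (each time the robber is pushed off the $u$-path it lands at $w_1$, from which it cannot reach $w_n$ before the cop returns to $v_1$); hence $D$ is cop-win with capture time $O(1)$. Crucially this bound is independent of $n$, since the domination dives let the cop capture regardless of where on the length-$\Theta(n)$ gadget the robber sits. Any winning strategy that instead enters the gadget and tries to run the robber down must chase it around a structure of length $\Theta(n)$ and therefore costs $\Omega(n)$ moves. Thus for $n$ large the minimum-length (optimal) strategy never leaves the $12$-cycle before the final capturing dive, so it captures the final $w$-phase from $v_1$ while the intervening $u$-phase forced it through $v_7$; occupying $v_1$, then $v_7$, then $v_1$ is impossible without leaving and re-entering $v_1$, which is a revisit.

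The main obstacle is establishing unavoidability rather than merely exhibiting one play. One must rule out every cleverer cop line: a first move $a\to v_7$ merely inserts a wasted visit and forces a later revisit of $v_7$ instead; stalling on the $12$-cycle to try to trap the robber on the $u$-path fails because of the timing of the chord; and descending into the gadget to chase is ruled out by the $\Omega(n)$ versus $O(1)$ comparison above. The technical heart is the timing claim that the chord at $u_{n-5}$ lets the robber reach a one-step exit from the $u$-path exactly when the cop first reaches $v_7$ (the antipode of $v_1$ on the $12$-cycle), so that no matter how the cop approaches $v_7$ the robber escapes to $w_1$, making a genuine second occupation of $v_1$ --- and hence a revisit --- unavoidable in optimal play.
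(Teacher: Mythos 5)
Your proposal is correct and follows essentially the same approach as the paper: the same construction (Figure~\ref{distanceex} augmented with the chord $(u_{n-5},w_1)$), the same identification of $w_n$ as the robber's optimal start, and the same forced $14$-round line in which the cop occupies $v_1$, circles to $v_7$ to flush the robber off the $u$-path through the chord, and must then return to $v_1$. Your additional care in ruling out alternative cop strategies (diving into the length-$\Theta(n)$ gadget costs $\Omega(n)$ moves versus the $O(1)$ circling strategy) is a welcome strengthening of the unavoidability claim, which the paper's proof treats more briefly.
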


\begin{proof}
Consider the graph in Figure \ref{mustrevisit}.  Note that this graph is obtained from the one in Figure \ref{distanceex} by adding the arc $(u_{n-5},w_1$ and leaving all remaining vertices and arcs unchanged.

The cop begins on $a$.  If the robber starts from any vertex in $v_i(s)$, the capture time will be one. If the robber begins on $w_i$ or $u_{i}$ the capture time will be two unless he begins on $w_n, u_1\;or\;u_{n-5}$. We proceed by cases.
\begin{enumerate}
    \item If the robber begins at $u_1$, then the cop will move to $v_7$, forcing the robber to move to $w_1$.  In the second through seventh rounds, the will move from $v_7$ to $v_1$ and the robber will finish on some $w_i$ (as in the proof of the previous theorem, $n$ will be chosen to be sufficiently large).  The cop then wins in the eighth round.
    \item If the robber starts from position $u_{n-5}$, then the cop moves to $v_7$, forcing the robber to move to $w_1$. This is similar to the above case, and the cop will again win after $8$ rounds.
    \item Let us assume the robber starts from $w_n$. In the first step, the cop moves from $a$ to $v_1$, forcing the robber to move to $u_n$. In the next five rounds, the cop will move from $v_1$ to $v_6$, and robber moves from $u_n$ to $u_{n-5}$. In the seventh step, the cop moves to $v_7$. The robber is forced to move to $w_1$. From above cases we know the cop requires seven more steps to capture the robber. Hence, the cop moves from $v_7$ again to $v_1$ in six more rounds (i.e. from $8^{th}$ to $13^{th}$ round), and then captures it in the $14^{th}$ round.
\end{enumerate}
The third case then represents optimal play, and the cop revisits vertex $v_1$ in this case.
\end{proof}

\section{Acknowledgements}
This work was supported in part by the Fonds de recherche du Qu\'ebec -- Nature et technologies (FRQNT) and the Natural Sciences and Engineering Research Council of Canada (NSERC).  Devvrit Khatri and Nithish Kumar also acknowledge the generous support received from MITACS and the Globalink Program.

\let\OLDthebibliography\thebibliography
\renewcommand\thebibliography[1]{
  \OLDthebibliography{#1}
  \setlength{\parskip}{0pt}
  \setlength{\itemsep}{0pt plus 0.3ex}
}

\bibliographystyle{siam}      
\small{
\bibliography{references.bib}   
}


\end{document}